\newcommand{\R}{\mathbb{R}}
\newcommand{\Z}{\mathbb{Z}}
\newcommand{\C}{\mathbb{C}}
\newcommand{\ang}[1]{\langle #1 \rangle}
\newcommand{\ov}{\overline}
\theoremstyle{plain} % Heading is bold, text italic.
 \newtheorem{theorem}{Theorem}[section]
\newtheorem*{theorem_th_nonoverlapping}{Theorem 1.1}
\newtheorem*{theorem_projections_of_Riesz_p}{Theorem 1.2}
\newtheorem*{theorem_th_symmetric1}{Theorem 1.3, part 1}
\newtheorem*{theorem_th_symmetric2}{Theorem 1.3, part 2}
 \newtheorem{proposition}[theorem]{Proposition}
 \newtheorem{lemma}[theorem]{Lemma}
 \newtheorem{remark}[theorem]{Remark}
\declaretheorem[style=definition,qed=$\blacktriangle$]{definition}
\newenvironment{lemmaproof}{
  
  \begin{proof}[Proof of Lemma]}{\end{proof}}
\title{Spectral Properties of\\Some Unions of Linear Spaces}
\author{Chun-Kit Lai, San Francisco State University
\and Bochen Liu, National Center for Theoretical Sciences, Taiwan 
\and Hal Prince, San Francisco State University}
\begin{document}
\maketitle

\begin{abstract}
We consider \textit{additive spaces}, consisting
of two intervals of unit length or two general probability measures on $\R^1$, positioned on the
axes in $\R^2$, with a natural additive measure $\rho$.
We study the relationship between the exponential frames,
Riesz bases, and orthonormal bases of $L^2(\rho)$ and
those of its component spaces. We find that the existence of exponential bases depends strongly on how we position our measures on $\R^1$.
We show that non-overlapping additive spaces possess Riesz bases, and we give a necessary condition for overlapping spaces. We also show that some overlapping additive spaces of Lebesgue type
have exponential orthonormal bases, while some do not. 
A particular example is the \textbf{L} shape at the origin, which
has a unique orthonormal basis up to translations of the form
\[
\left\{e^{2 \pi i (\lambda_1 x_1 + \lambda_2 x_2)} : (\lambda_1, \lambda_2) \in \Lambda 
\right\},
\]
where
\[
\Lambda = \{ (n/2, -n/2) \mid n \in \Z \}.
\]
\end{abstract}

\section{Introduction}

A basic fact of Fourier analysis is that the unit interval $I$ in $\R$ has an exponential
orthonormal basis $\{ e^{2 \pi i n x} \}_{n \in \Z}$ for its $L^2(I)$. This concept was generalized to spectral sets by Fuglede \cite{Fug74}. A {\it spectral set} $\Omega$ (with Lebesgue measure normalized to 1) is a set in which there exists an exponential
orthonormal basis $E(\Lambda): = \{ e^{2 \pi i \lambda \cdot x} \}_{\lambda \in \Lambda}$ for its $L^2(\Omega)$ for some countable set $\Lambda$, which we call it a {\it spectrum}. The concept was further generalized to {\it spectral measures}, where a probability measure $\mu$ is spectral if there exists an exponential orthonormal basis for $L^2(\mu)$. 

\medskip

Spectral sets and measures have a long history.  Fuglede proposed a conjecture that spectral sets and translational tiles are equivalent. The conjecture was  open for thirty years until it was disproved by Tao \cite{Tao04} in dimension five or higher. It was later disproved in both directions in dimension three or higher \cite{KM06,KM}. Nonetheless, the study of the conjecture remains active until today. For example, Lev and Matolcsi recently proved that Fuglede's conjecture is true if the set is convex \cite{LM19}.  Concerning spectral measures, Jorgensen and Pedersen showed the middle-fourth Cantor measure is a spectral measure while the middle-third Cantor measure is not \cite{JP98}. This was the  first singular spectral measure without any point mass that was ever discovered. Such fractal self-similar measures were further studied  in the literature. In principle, fractal measures generated by infinite rescaled convolution of point mass spectral measures are essentially spectral.   For the details and recent advances, one can refer to \cite{AFL} and its references. 

\medskip

As one might expect, spectral measures are usually rare in nature due to the rigid orthogonality equation. People are looking for measures that admit exponential frames (also called Fourier frames) and exponential Riesz bases. One can show that sets of finite positive Lebesgue measures always  admit Fourier frames (for unbounded sets, see \cite{NOU}).  Sets admitting Riesz bases are much less understood. To the best of our knowledge, only finite unions of intervals and convex bodies with symmetric faces 
are known to admit Riesz bases of exponentials \cite{KN, KN16, DLev}.  Determining   whether many simple objects such as unit balls or triangles admit a Riesz basis is still an open problem.

\medskip

The existence of exponential Riesz bases and Fourier frames can be investigated in $L^2$ spaces of more general measures.  We call measures admitting Fourier frames or Riesz bases {\it frame-spectral measures} or {\it Riesz-spectral measures} respectively. The frequency set $\Lambda$ will be called a {\it frame-spectrum} or {\it Riesz-spectrum}.  General properties of these measures were given in \cite{HLL}. In \cite{LW,DEL}, fractal measures that are Riesz-spectral, but not spectral, were discovered. Strichartz \cite{St1}  asked whether the middle-third Cantor measure is frame-spectral, and the problem remains open as of today. Lev \cite{Lev} studied the  addition of two measures supported respectively on two orthogonal subspaces embedded in the ambient space ${\mathbb R}^d$. This additive measure will be singular with respect to the Lebesgue measure, and  he showed that these measures admit Fourier frames. This result was later generalized to the addition of finitely many restricted Lebesgue measures supported on union of subspaces. In particular, all polytope surface measures admit Fourier frames \cite{ILLW}.

\medskip

In this paper,  we continue the line of research into the spectrality of the addition of measures supported on two orthogonal subspaces. Our focus will be on $\R^2$,
with the measures supported on $\R^1$, though some of the results may be generalized to higher dimensions.
\medskip

We recall that a Borel measure $\mu$ is continuous if $\mu(\{x\}) = 0$ for all $x\in{\mathbb R}$. 

\vspace{1em}

\begin{definition}
Let $\mu$ and $\nu$ be  two  continuous Borel probability measures on $\R^1$. We 
  embed them into the $x$ and $y$ axes in $\R^2$ respectively.
     The \textit{additive space over $\mu$ and $\nu$}
  is the space $L^2(\rho)$, where $\rho$ is the measure
  \begin{equation}
    \label{rho_def}
    \rho = \frac12 {(\mu \times \delta_0 + \delta_0 \times \nu)} ,
  \end{equation}
  and $\delta_0$ is the Dirac measure at $0$. We will refer to the (compact) support of $\mu$ and $\nu$ as the {\it component spaces} of the measure $\rho$. If $\mu= \nu$, we say that $\rho$ is  {\it symmetric}. If $0\not\in$ supp$(\mu)$ $\cap$ supp$(\nu)$, we call $\rho$ {\it non-overlapping}. If $\mu,\nu$ are Lebesgue measures supported on intervals of length one, we call $\rho$ the {\it additive space of Lebesgue type.} (Here, supp$(\mu)$ denotes the compact support of $\mu$.)
\end{definition}

We are interested in \textit{spectral properties} of $L^2(\rho)$,  given that $\mu$ and $\nu$ are spectral (or Riesz-spectral, or frame-spectral). As mentioned above, frame-spectrality has already been studied by Lev in \cite{Lev}. So in this paper we mainly focus on Riesz bases and orthonormal bases. Our results show that the positions of the measures $\mu,\nu$ on $\R^1$ are important in determining the existence of a basis.

\medskip

\begin{theorem}\label{th_nonoverlapping}
Let $\rho$ be a non-overlapping symmetric additive measure with the component measure $\mu$. Suppose that $\mu$ is Riesz-spectral. Then so is $\rho$.
\end{theorem}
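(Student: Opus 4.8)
The plan is to exploit the non-overlapping hypothesis to split $L^2(\rho)$ as an orthogonal direct sum, and then realize the desired Riesz basis as the image of a ``diagonal'' one under an explicit bounded invertible modulation operator. Since $\rho$ is symmetric we have $\mu=\nu$, and non-overlapping then forces $0\notin\operatorname{supp}(\mu)=\operatorname{supp}(\nu)$; as this support is compact it lies in an annulus $\{\,\epsilon\le|x|\le R\,\}$ for some $0<\epsilon\le R<\infty$. The two pieces of $\operatorname{supp}(\rho)$, namely $\operatorname{supp}(\mu)\times\{0\}$ and $\{0\}\times\operatorname{supp}(\nu)$, are therefore disjoint, so the restriction map $f\mapsto\bigl(f(\cdot,0),f(0,\cdot)\bigr)$ is, up to the harmless factor $\tfrac12$, an isometric isomorphism
\[
L^2(\rho)\;\cong\;L^2(\mu)\oplus L^2(\nu),
\]
under which each exponential $e_\gamma$, $\gamma=(\gamma_1,\gamma_2)$, corresponds to the pair $(e_{\gamma_1},e_{\gamma_2})$, where $e_u(x)=e^{2\pi i u x}$. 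I would work entirely on the right-hand side, noting that the Riesz-basis property is preserved by this isomorphism.

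Next comes the construction. Let $\Lambda$ be a Riesz-spectrum for $\mu$; since $\mu=\nu$ the same $\Lambda$ works for $\nu$, so
\[
\mathcal B=\{(e_\lambda,0):\lambda\in\Lambda\}\cup\{(0,e_\lambda):\lambda\in\Lambda\}
\]
is a Riesz basis of $L^2(\mu)\oplus L^2(\nu)$. These vectors are not exponential pairs, but modulations convert them into pairs. Writing $M_u$ for multiplication by $e_u$ (a unitary on each factor, with $M_u e_\lambda=e_{\lambda+u}$), fix reals $s,t$ and set
\[
S=\begin{pmatrix} I & M_s\\ M_t & I\end{pmatrix}.
\]
Then $S(e_\lambda,0)=(e_\lambda,e_{\lambda+t})$ and $S(0,e_\lambda)=(e_{\lambda+s},e_\lambda)$, so $S\mathcal B$ is exactly the exponential system whose frequency set is
\[
\Gamma=\{(\lambda,\lambda+t):\lambda\in\Lambda\}\cup\{(\lambda+s,\lambda):\lambda\in\Lambda\}.
\]
Since the image of a Riesz basis under a bounded invertible operator is again a Riesz basis, it remains only to choose $s,t$ making $S$ bounded and invertible.

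The invertibility of $S$ is the main obstacle, and the only place the non-overlapping hypothesis is essential. Boundedness is immediate, as $M_s,M_t$ are unitary. For invertibility, a Schur-complement computation shows that $S$ is invertible if and only if $I-M_tM_s=I-M_{s+t}$ is invertible on $L^2(\mu)$; this is multiplication by $1-e^{2\pi i(s+t)x}$, whose inverse is bounded precisely when $|1-e^{2\pi i(s+t)x}|=2|\sin(\pi(s+t)x)|$ is bounded below on $\operatorname{supp}(\mu)$. Here the hypothesis pays off: choosing $s+t$ with $0<s+t<\tfrac1{2R}$ keeps $|(s+t)x|$ inside $(0,\tfrac12)$ and bounded away from $0$ by $(s+t)\epsilon$ for every $x\in\operatorname{supp}(\mu)$, yielding the uniform lower bound $2\sin(\pi(s+t)\epsilon)>0$; the choice $s+t\neq0$ also guarantees the two families in $\Gamma$ are disjoint. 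Hence $S$ is invertible, $\Gamma$ is a Riesz-spectrum for $\rho$, and $\rho$ is Riesz-spectral. I expect the borderline estimate $\|M_s\|\,\|M_t\|=1$ to be the subtle point: it is exactly the separation of $\operatorname{supp}(\mu)$ from the origin that rescues invertibility, which is why one anticipates the conclusion failing in the overlapping case.
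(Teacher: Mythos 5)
Your proposal is correct and is essentially the paper's argument in slicker packaging: the paper takes the same frequency set (your $\Gamma$ with $s=0$, $t=\tau$) and verifies the frame and Riesz--sequence inequalities separately by showing the pointwise $2\times 2$ matrix $\begin{psmallmatrix}1&1\\1&e_{-\tau}\end{psmallmatrix}$ is uniformly invertible on $\operatorname{supp}(\mu)$, which is exactly your Schur-complement condition that $1-e^{2\pi i(s+t)x}$ be bounded away from zero, available precisely because $0\notin\operatorname{supp}(\mu)$. Your formulation via a bounded invertible block operator acting on the obvious Riesz basis of $L^2(\mu)\oplus L^2(\nu)$ obtains both inequalities at once, but the decomposition, the spectrum, and the key estimate are the same as in the paper.
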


We will prove Theorem \ref{th_nonoverlapping} in Section 3. The overlapping case remains open, while we obtain the following necessary condition,
which helps in the proof of Theorem \ref{th_symmetric} below.

\medskip

\begin{theorem}
  \label{projections_of_Riesz_p}
  Suppose $E(\Lambda)$ is a Riesz basis for an additive
  space $L^2(\rho)$. Then for  at least one of the projections of $\Lambda$ onto the $x-$, $y-$axis, the corresponding collection of exponentials are not  Riesz bases for their 
  component spaces. 
\end{theorem}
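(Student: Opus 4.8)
The plan is to reduce the statement to a clean fact about Riesz bases of orthogonal direct sums, and the first step is to record the right identification of the ambient space. Because $\mu$ and $\nu$ are continuous, the origin is the only point shared by the two axes and it carries no $\rho$-mass, so for $f \in L^2(\rho)$ we have
\[
\|f\|_{L^2(\rho)}^2 = \tfrac12\|f(\cdot,0)\|_{L^2(\mu)}^2 + \tfrac12\|f(0,\cdot)\|_{L^2(\nu)}^2 .
\]
This gives a unitary identification $L^2(\rho) \cong L^2(\mu) \oplus L^2(\nu)$, where the harmless factor $\tfrac12$ does not affect the Riesz-basis property. Writing $\lambda = (\lambda_1,\lambda_2)$ and restricting $e^{2\pi i \lambda \cdot x}$ to each axis, the exponential $e_\lambda$ corresponds under this identification to the pair $(e_{\lambda_1}, e_{\lambda_2})$, with $e_{\lambda_1}(t) = e^{2\pi i \lambda_1 t} \in L^2(\mu)$ and $e_{\lambda_2}(t) = e^{2\pi i \lambda_2 t} \in L^2(\nu)$. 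Thus $E(\Lambda)$ is a Riesz basis for $L^2(\rho)$ exactly when $\{(e_{\lambda_1},e_{\lambda_2})\}_{\lambda \in \Lambda}$ is a Riesz basis for $L^2(\mu) \oplus L^2(\nu)$, and the two projected families are precisely $\{e_{\lambda_1}\}_{\lambda \in \Lambda}$ in $L^2(\mu)$ and $\{e_{\lambda_2}\}_{\lambda \in \Lambda}$ in $L^2(\nu)$.

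With this in hand, I would isolate the following abstract lemma: if $\{w_n = (u_n,v_n)\}_n$ is a Riesz basis for a direct sum $H_1 \oplus H_2$ and $\{u_n\}_n$ is a Riesz basis for $H_1$, then $H_2 = \{0\}$. To prove it I would use the unique dual Riesz basis $\{\widetilde w_n = (\widetilde u_n, \widetilde v_n)\}$ of $\{w_n\}$ and expand vectors of the form $(a,0)$ with $a \in H_1$:
\[
(a,0) = \sum_n \langle a, \widetilde u_n \rangle\, (u_n, v_n).
\]
Comparing first coordinates shows $\{\widetilde u_n\}$ is the biorthogonal dual of the Riesz basis $\{u_n\}$ of $H_1$, so the scalars $\langle a, \widetilde u_n \rangle$ are exactly its expansion coefficients; comparing second coordinates gives $\sum_n \langle a, \widetilde u_n \rangle v_n = 0$ for every $a \in H_1$. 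Choosing $a = u_m$ and invoking biorthogonality in $H_1$ forces $v_m = 0$ for all $m$. Then $w_n = (u_n,0)$, so $\{w_n\}$ spans only $H_1 \oplus \{0\}$, contradicting completeness unless $H_2 = \{0\}$. (Equivalently, one can argue with the synthesis operator $Tc = (T_1 c, T_2 c)$: if $T_1$ is invertible then the range of $T$ is the graph of $T_2 T_1^{-1}$, which is never all of $H_1 \oplus H_2$ when $H_2 \neq \{0\}$.)

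Finally I would apply the lemma in both directions. Taking $H_1 = L^2(\mu)$, $H_2 = L^2(\nu)$ with $u_n = e_{\lambda_1}$, $v_n = e_{\lambda_2}$: since $\nu$ is a nonzero probability measure, $H_2 \neq \{0\}$, so the $x$-projected family cannot be a Riesz basis for $L^2(\mu)$; swapping the axes shows the $y$-projected family cannot be a Riesz basis for $L^2(\nu)$ either. This is in fact stronger than the stated theorem, since it shows \emph{neither} projection is a Riesz basis. The only points requiring care are the initial identification $L^2(\rho) \cong L^2(\mu) \oplus L^2(\nu)$, where continuity of the measures is used to discard the origin, and the bookkeeping of multiplicities in the projected families (if two frequencies in $\Lambda$ share an $x$-coordinate, that family already contains a repeated vector and fails to be a Riesz basis outright). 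I expect no serious obstacle: the main task is simply to phrase the orthogonal-sum decomposition correctly, after which the dual-basis computation is routine.
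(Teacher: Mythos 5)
Your reduction to the weighted direct sum $L^2(\mu)\oplus L^2(\nu)$ and the dual-basis lemma are correct, and this is a genuinely different route from the paper's, which is combinatorial: the paper shows that a Riesz basis of $L^2(\rho)$ admits no zigzag loops and only zigzags of uniformly bounded length, and then shows that if both projected systems were Riesz bases one could construct arbitrarily long zigzags. Your abstract lemma (if $\{(u_n,v_n)\}$ is a Riesz basis for $H_1\oplus H_2$ and $\{u_n\}$ is a Riesz basis for $H_1$, then every $v_n=0$) is proved correctly via the dual basis and biorthogonality, and in the case where no two points of $\Lambda$ share an $x$-coordinate and no two share a $y$-coordinate it settles the theorem cleanly, indeed with the stronger conclusion that \emph{neither} projection is a Riesz basis, since $v_m=e_{\lambda_2}$ has norm one in $L^2(\nu)$ rather than norm zero.

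The gap is the multiplicity case. The theorem concerns $E(\Lambda_x)$ where $\Lambda_x=\pi_x(\Lambda)$ is a \emph{set}: if $(a,b),(a,b')\in\Lambda$ with $b\ne b'$, the exponential $e_a$ appears only once in $E(\Lambda_x)$. Your parenthetical disposes of this case by passing to the \emph{indexed} family $\{e_{\lambda_1}\}_{\lambda\in\Lambda}$, which does contain a repeated vector and so is trivially not a Riesz basis --- but that says nothing about $E(\Lambda_x)$, which is the object in the statement. Multiplicity at least $2$ cannot be excluded a priori: Theorem \ref{bounded_multiplicity} gives only \emph{bounded} multiplicity for frames, and only orthonormal bases are forced to have multiplicity one (Theorem \ref{multiplicity_one}). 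If you try to run your argument in this case, you must first group the coefficients $\langle a,\widetilde u_\lambda\rangle$ over the fibers $\pi_x^{-1}(\alpha)$ and $\pi_y^{-1}(\beta)$ before invoking uniqueness of coefficients in the bases $E(\Lambda_x)$ and $E(\Lambda_y)$; taking $a=e_{\alpha'}$ one finds that the column sums equal $\delta_{\alpha\alpha'}$ while every row sum vanishes, and extracting a contradiction then requires interchanging the order of summation of a merely square-summable array. This is precisely the obstruction the paper isolates in its heuristic discussion at the start of Section 5, and the zigzag machinery (no finite subset is zigzag complete, hence arbitrarily long zigzags exist) is built exactly to circumvent it. Your proof therefore needs either a separate argument ruling out multiplicity $\ge 2$ under your hypotheses, or an analogue of the paper's combinatorial step, before it is complete.
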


The Fourier frames for the additive space constructed by Lev \cite[Theorem 1.1]{Lev}  was obtained by taking a subset of the cartesian product of the spectra of the component spaces. He actually deduced an algorithm for constructing this subset.   However, Theorem \ref{projections_of_Riesz_p} implies that one cannot generate a Riesz basis for the additive space by simply taking a subset of the cartesian product of the Riesz spectra from its component spaces. Furthermore, the frame spectra constructed by Lev \cite{Lev} cannot be a Riesz basis either (See Remark \ref{remark}). 

\medskip

We will prove Theorem \ref{projections_of_Riesz_p} in Section 5, by studying some limitations of zigzag paths in a Riesz basis for an additive space. We will show that the zigzags cannot be arbitrarily long.  However, if we assume both projections are Riesz bases for its component spaces, then arbitrarily long zigzags exist, which is a contradiction.

\medskip

We also use Theorem \ref{projections_of_Riesz_p} to deduce the following result about the spectrality of symmetric additive measures of Lebesgue type.

\medskip

\begin{theorem}\label{th_symmetric}
Let $\rho$ be a symmetric measure of Lebesgue type, where the component measure $\mu$ is the Lebesgue measure supported on $[t,t+1]$ and $-1/2<t \le 0$. 
\begin{enumerate}
\item If $t=0$, $\rho$ is spectral and has a unique spectrum up to translations.
\item If $2t+1 = \frac1{a}$ (i.e. $t = -\frac12+\frac1{2a}$), where $a>1$ is a positive integer, the measure is not spectral. 
\end{enumerate}
\end{theorem}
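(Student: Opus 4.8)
The plan is to collapse the whole statement into a single scalar orthogonality equation and then treat the two parts separately. Since $\mu=\nu$ is continuous, the origin is $\rho$-null and the two axis-segments meet only there, so $L^2(\rho)\cong L^2(\mu)\oplus L^2(\nu)$ with inner product $\langle\cdot,\cdot\rangle_\rho=\tfrac12(\langle\cdot,\cdot\rangle_\mu+\langle\cdot,\cdot\rangle_\nu)$, and $e_\lambda=e^{2\pi i(\lambda_1 x_1+\lambda_2 x_2)}$ corresponds to the pair $(e^{2\pi i\lambda_1 x},e^{2\pi i\lambda_2 y})$. Writing $\widehat\chi(\xi)=\int_t^{t+1}e^{2\pi i\xi x}\,dx=e^{2\pi i(t+\frac12)\xi}\tfrac{\sin\pi\xi}{\pi\xi}$, every $e_\lambda$ has norm $1$ and, for $\lambda\neq\lambda'$, $\langle e_\lambda,e_{\lambda'}\rangle_\rho=\tfrac12\big(\widehat\chi(\lambda_1-\lambda_1')+\widehat\chi(\lambda_2-\lambda_2')\big)$. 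Hence $E(\Lambda)$ is orthonormal iff $(\Lambda-\Lambda)\setminus\{0\}$ lies in the zero set $Z=\{(\delta_1,\delta_2):\widehat\chi(\delta_1)+\widehat\chi(\delta_2)=0\}$, and it is an orthonormal basis iff in addition the Parseval/tiling identity $\sum_{\lambda\in\Lambda}|\widehat\rho(\xi-\lambda)|^2\equiv1$ holds, where $\widehat\rho$ is the Fourier transform of $\rho$. The theorem thus reduces to a study of $Z$ together with this completeness condition, for the two prescribed values of $t$.

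For part 1, $t=0$, so $\widehat\chi(\xi)=e^{\pi i\xi}\tfrac{\sin\pi\xi}{\pi\xi}$. For existence, with $\Lambda_0=\{(n/2,-n/2):n\in\Z\}$ and $k=n-m\neq0$ I would simply compute $\widehat\chi(k/2)+\widehat\chi(-k/2)=\tfrac{\sin\pi k}{\pi k/2}=0$, giving orthonormality. Completeness I would obtain from the folding isometry $U:L^2(\rho)\to L^2[-1,1]$ defined by $U(f,g)=f$ on $[0,1]$ and $U(f,g)(x)=g(-x)$ on $[-1,0]$ (an isometry up to the factor $\sqrt2$), under which $e_{(n/2,-n/2)}\mapsto e^{\pi i n x}$; since $\{e^{\pi i n x}\}_{n\in\Z}$ is an orthonormal basis of $L^2[-1,1]$, so is $E(\Lambda_0)$. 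Equivalently, if $(f,g)\perp E(\Lambda_0)$ then $H:=U(f,g)$ has all Fourier coefficients on $[-1,1]$ vanishing, so $H=0$ and hence $f=g=0$.

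Uniqueness is the heart of part 1. First I would classify $Z$: splitting $\widehat\chi(\delta)=e^{\pi i\delta}\tfrac{\sin\pi\delta}{\pi\delta}$ into real and imaginary parts and dividing the two resulting equations gives $\cot\pi\delta_1=\cot\pi\delta_2$, hence $\delta_1-\delta_2\in\Z$; feeding this back yields either $\delta_1,\delta_2\in\Z\setminus\{0\}$ (both sine factors vanish) or $(\delta_1,\delta_2)=(m/2,-m/2)$ with $m$ odd. In particular no difference has a vanishing coordinate, so the coordinate projections of $\Lambda$ are injective. Translating so that $0\in\Lambda$, every point is of one of these two types, and analyzing differences between the types forces the integer-type points onto the antidiagonal $\lambda_1+\lambda_2=0$. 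The genuinely delicate step is ruling out configurations that are orthonormal but incomplete (for instance the diagonal $\{(n,n)\}$, which spans only the even functions under $U$): here I would invoke completeness through the tiling identity, equivalently through the fact that each projection is a tight exponential frame of bound $2$, hence of density exactly $2$, for the unit interval, to force consecutive elements of $\Lambda$ to differ by $(1/2,-1/2)$ with no gaps, giving $\Lambda=\Lambda_0+(\text{const})$. Upgrading ``orthonormal plus correct density'' to the exact arithmetic progression is the main obstacle.

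For part 2, $t=-\tfrac12+\tfrac1{2a}$, so the phase of $\widehat\chi$ is $e^{2\pi i(t+\frac12)\xi}=e^{\pi i\xi/a}$. Running the same real/imaginary analysis, any $(\delta_1,\delta_2)\in Z$ with a nonvanishing sine factor must satisfy $\delta_1-\delta_2\in a\Z$ together with $|\sin\pi\delta_1|/|\delta_1|=|\sin\pi\delta_2|/|\delta_2|$, while the only remaining solutions have $\delta_1,\delta_2\in\Z\setminus\{0\}$. Assuming toward a contradiction that $E(\Lambda)$ is an orthonormal basis, both projections $\pi_x(\Lambda)$ and $\pi_y(\Lambda)$ are again tight exponential frames of bound $2$ for the unit interval. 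I would then show that the restriction $\delta_1-\delta_2\in a\Z$, which forces $\lambda_1-\lambda_2$ to be constant modulo $a$ along $\Lambda$, confines $\Lambda$ to a sparse family of parallel antidiagonal lines that cannot support both coordinate projections at density $2$. Carefully absorbing the exceptional integer-type differences and converting this sparsity into a quantitative violation of $\sum_\lambda|\widehat\rho(\xi-\lambda)|^2\equiv1$ is the crux, and it yields that no orthonormal basis exists when $a>1$.
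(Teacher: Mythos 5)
Your setup (reduction to the scalar orthogonality equation, the classification of the zero set $Z$, the folding isometry for existence in part 1) matches the paper's, and your classification of $Z$ is correct. But the proof as proposed has genuine gaps at exactly the two places you yourself flag as ``the main obstacle'' and ``the crux.'' For part 1, the step you are missing is not a density argument at all but a short combinatorial one: partition $\Lambda$ by the equivalence relation ``difference is an integer pair'' into a class $P_1\ni(0,0)$ and a class $P_2$ of points with half-odd-integer coordinates. Once you know $P_2\neq\emptyset$, every cross-difference is of the type $(m/2,-m/2)$, so $a+b=u+v$ for every $(a,b)\in P_1$ and $(u,v)\in P_2$; letting both points range shows \emph{all} of $\Lambda$ has the same coordinate sum, namely $0$, hence $\Lambda\subseteq\{(n/2,-n/2):n\in\Z\}$. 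Since that full set is orthonormal, maximality of an orthonormal basis forces equality --- no ``upgrading of correct density to an exact arithmetic progression'' is needed. What \emph{is} needed is the input $P_2\neq\emptyset$, i.e.\ that $\Lambda\not\subseteq\Z\times\Z$; the paper obtains this from its Theorem 1.2 (both projections of a Riesz basis cannot be Riesz bases, proved via the zigzag/loop argument), while your route via ``projections are tight frames of bound $2$, hence of density $2$, but a subset of $\Z$ has density at most $1$'' is a workable substitute provided you actually prove or cite the density statement for tight exponential frames rather than assert it.

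For part 2 the gap is larger. You never separate the cases $a$ even and $a$ odd, which behave differently: for $a$ even the orthogonality equation has \emph{only} integer solutions, so $\Lambda\subseteq\Z\times\Z$ and you are done by the same ``not all integers'' input, with no sparsity analysis at all. For $a$ odd, the correct move is again structural, not quantitative: the non-integer solutions satisfy $\lambda_1=-\lambda_2$ with $\lambda_1$ an odd multiple of $a/2$ (your condition ``$\delta_1-\delta_2\in a\Z$ plus equality of $|{\sin\pi\delta}|/|\delta|$'' needs to be pushed to this conclusion), and the same common-sum argument as in part 1 then pins $\Lambda$ to $\{(an/2,-an/2):n\in\Z\}$ exactly. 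At that point the contradiction is immediate from Landau: the projection $\tfrac{a}{2}\Z$ has Beurling density $2/a<1$ and so cannot even be a frame for $L^2[t,t+1]$, yet projections of a frame must be frames. Your proposed alternative --- ``absorbing the exceptional integer-type differences and converting sparsity into a quantitative violation of $\sum_\lambda|\widehat\rho(\xi-\lambda)|^2\equiv1$'' --- is left entirely unexecuted, and the difficulty you would face (integer-type differences do \emph{not} satisfy $\delta_1-\delta_2\in a\Z$, so $\lambda_1-\lambda_2$ is not constant mod $a$ along $\Lambda$ a priori) is precisely what the equivalence-class argument is designed to dispose of. In short: the skeleton is right, but the two decisive lemmas are missing, and both are supplied in the paper by the common-sum argument together with the ``$\Lambda\not\subseteq\Z\times\Z$'' theorem.
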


We will prove Theorem \ref{th_symmetric} in Section 6. 

%We are not able to determine spectrality for $t = -1/2$ and other cases in between.

\medskip

The paper proceeds as follows: Section 2 defines terms used in the rest of the paper.
We prove Theorem \ref{th_nonoverlapping} in Section 3. Section 4 contains theorems about the relationship between the (frame/Riesz-)spectra and the corresponding spectrality of its projections in additive spaces. Section 5 is devoted to proving Theorem \ref{projections_of_Riesz_p}.  Section 6 considers orthonormal bases in additive spaces, and contains our proof of Theorem \ref{th_symmetric}.   We end our paper with several interesting open problems in Section 7. 
%We show that the key to finding such bases is solving an equation,
%slightly different for each space, which we call the \textit{Orthogonality Equation}.
%We use the Orthogonality Equation to show that some additive spaces
%have no orthonormal basis.  However, the space whose
%components form an \textbf{L} at the origin does have an orthonormal basis,
%which, viewed as a set of points in $\R^2$, is the set
%\[
%\{ (n/2, -n/2) \mid n \in \Z \}.
%\]

\section{Preliminaries and definitions}

We will introduce some basic notation for our additive spaces in this section. 
 From (\ref{rho_def}),  any integrable function $F$ with respect to $\rho$ satisfies 
\vspace{1em}
\begin{equation}
  \label{int_def}
  \int F(x, y) \, d \rho (x,y) = 
  \frac 1 2 \int F(x, 0) \, d \mu(x) + \frac 1 2  \int F(0, y) \, d \nu(y).
\end{equation}
For a more compact form, if  $f(x,y)$ is a function of two variables,
  we define 
  \[
  f_x = f(x,0) \quad \text{ and } \quad f_y = f(0, y).
  \]
 Here $f_x$ is called the \textit{x projection} of $f$,
and $f_y$ is called the \textit{y projection} of $f$. Using this notation, Equation (\ref{int_def}) becomes
\[
  \int F \, d \rho = \frac 1 2 \int F_x \, d \mu + \frac 1 2 \int F_y \, d \nu.
\]

In $L^2(\rho)$, our inner product is 
\[
  \ang{f, g } = \int f \ov{g} \, d \rho.
\]
Thus 
\[
  \ang{f, g}_{L^2(\rho)}
  = \tfrac 1 2 \ang{f_x,g_x}_{L^2(\mu)} + \tfrac 1 2 
  \ang{f_y, g_y}_{L^2(\nu)}.
\]
We also have
\[
  \|f\|^2_{L^2(\rho)}= \tfrac 1 2 \|f_x\|^2_{L^2(\mu)} + \tfrac 1 2 
  \|f_y\|^2_{L^2(\nu)}.
\]

Usually
we write
inner products without
$L^2$ subscripts, expecting the
space to be clear from the context.
%  defined for $E \subset \R$ as
%\[
%  \delta_0(E) = \begin{cases}
%    1 & 0 \in E \\
%    0 & 0 \notin E.
%  \end{cases}
%    \]
%  \end{definition}
  \vspace{1em}
  
%  Though we have defined the additive space as the function
%  space $L^2(\rho)$, it is sometimes convenient to use the term for
%  the graph of the two component intervals in $\R^2$,
%  as well as for the measure $\rho$ itself.
%  \vspace{1em}

Here are some examples of additive spaces, along with the names
we have given them:

\begin{itemize}
  \item The \textit{L Space} (Figure \ref{fig_L})
has $[0,1]$ on the $x$-axis and $[0,1]$ on the
$y$-axis.

\item The \textit{Plus Space} (Figure \ref{fig_Plus})
has $[-1/2, 1/2]$ on each axis.

\item The \textit{T Space} (Figure \ref{fig_T})
has $[-1/2, 1/2]$ on the $x$-axis and $[-1,0]$ on the $y$-axis.

\item The \textit{symmetric spaces} have the same interval $[t,t+1]$ on
both 
axes.  If $-1\le t \le 0$, the symmetric space is {\it overlapping}. We only need to consider $t\in[-1/2,0]$ since reflections do not affect spectrality.   Both the L Space and the Plus Space are symmetric spaces.
Figure \ref{fig_Symm} has another example.
\end{itemize}

\begin{figure}[h]
  \centering
  \begin{minipage}[b]{.5 \textwidth}
    \centering
\begin{tikzpicture}[scale=.7]
  \draw[thick] (0,0) -- (3,0) node[anchor=west] {1};
  \draw[thick] (0,0) -- (0,3) node[anchor=south] {1};
  \filldraw[black] (0,0) circle (2pt) node[anchor=north east] {0};
\end{tikzpicture}
\captionof{figure}{The L Space}
  \label{fig_L}
\end{minipage}%
\begin{minipage}[b]{.5 \textwidth}
  \centering
\begin{tikzpicture}[scale=.7]
  \draw[thick] (-1.5,0) node[anchor=east] {$- \frac 1 2 $}  -- (1.5,0) 
  node[anchor=west] {$\frac 1 2$};
  \draw[thick] (0,-1.5) node[anchor=north] {$- \frac 1 2$} -- (0,1.5) 
  node[anchor=south] {$\frac 1 2$};
  \filldraw[black] (0,0) circle (2pt) node[anchor=north east] {0};
\end{tikzpicture}
\captionof{figure}{The Plus Space}
  \label{fig_Plus}
\end{minipage}

  \centering
  \begin{minipage}[b]{.5 \textwidth}
    \centering
\begin{tikzpicture}[scale=.7]
  \draw[thick] (-1.5,0) node[anchor=east] {$- \frac 1 2 $}  -- (1.5,0) 
  node[anchor=west] {$\frac 1 2$};
  \draw[thick] (0,-3) node[anchor=north] {$- 1$} -- (0,0);
  \filldraw[black] (0,0) circle (2pt) node[anchor=north east] {0};
\end{tikzpicture}
\captionof{figure}{The T Space}
  \label{fig_T}
\end{minipage}%
\begin{minipage}[b]{.5 \textwidth}
  \centering
\begin{tikzpicture}[scale=.7]
  \draw[thick] (-1,0) node[anchor=east] {$- \frac 1 3 $}  -- (2,0) 
  node[anchor=west] {$\frac 2 3$};
  \draw[thick] (0,-1) node[anchor=north] {$- \frac 1 3$} -- (0,2)
  node[anchor=south] {$\frac 2 3$};
  \filldraw[black] (0,0) circle (2pt) node[anchor=north east] {0};
\end{tikzpicture}
\captionof{figure}{A Symmetric Space}
  \label{fig_Symm}
\end{minipage}
\vspace{1em}
\end{figure}

%Rotating any additive space
%by quarter turns yields other spaces that behave
%very similarly.
%Generally we ignore these rotations.
%For example, when we later
%say that the L Space is the only
%additive space that we know to have an orthonormal basis,
%we mean the L Space and its three rotations.

\subsection{Exponential frames, Riesz bases, and orthonormal bases}

We study exponential functions 
on additive spaces. 
Here \textit{exponential} refers
to functions mapping $\R^2 \to \mathbb{T}\subset\C$ of the form
\[
(x,y) \to e^{2 \pi i (ax + by)},
\]
where $a, b \in \R$.  We use $e_{a,b}$ as a shorthand for this function;
similarly $e_a$ is the one-dimensional function
\[
x \to e^{2 \pi i a x}.
\]

\vspace{1em}
\begin{definition}
If $E$ is a set of exponential functions, then $\Lambda(E)$ (or
sometimes simply $\Lambda$) is the \textit{set of exponents} of $E$, that is
\[
    \Lambda = \{ (a,b) \mid e_{a,b} \in E \}.
\]
Similarly, $E$ is the \textit{set of exponentials} of $\Lambda$,
sometimes written as $E(\Lambda)$.
\end{definition}

With exponential functions, we have $e_{a,b}(x,0) = e_a(x)$
and $e_{a,b}(0,y) = e_b(y)$.  Hence
\[
  \ang{e_{a,b}, e_{u,v}}_{L^2(\rho)} = \tfrac 1 2 \ang{e_a, e_u}_{L^2(\mu)} + \tfrac 1 2
  \ang{e_b, e_v}_{L^2(\nu)}.
\]
Note also that for $e_{a,b}$, the $x$ projection
is $e_a$, and the $y$ projection is $e_b$. We will use these observations frequently in the paper.

\vspace{1em}

%We have a dual representation of $E$: as a set of functions, or
%as a set of points in $\R^2$.  We informally
%move back and forth between
%these representations freely.  For example, we might say that an
%orthonormal basis for an additive space ``cannot consist of
%pairs of integers''.

The exponential functions that we study are frames, Riesz bases, and orthonormal bases.
These are always 
exponential frames, exponential Riesz bases, and exponential orthonormal bases,
though for practical reasons we often omit the word \textit{exponential}.
The term \textit{spectral} usually
refers to a set or measure with an
exponential orthonormal basis.  Related terms are \textit{F-spectral}, referring
to a  set or measure with an
exponential frame, and \textit{R-spectral}, referring to a set or measure with an exponential Riesz basis.  The word \textit{spectral} in the title 
of our paper refers to all three types.

Below we define \textit{frames}, \textit{Riesz bases}, and \textit{orthonormal bases}
for an additive space $L^2(\rho)$. For the comprehensive theory of frames and bases, we refer  the reader to \cite{Chr,H}.   We assume that $\Lambda$ is
the (countable) set of exponents for the given set of functions.  Note
that an element of $\Lambda$ is a pair of real numbers.
\vspace{1em}

\begin{definition}
 $E(\Lambda)$ is a \textit{frame} for $L^2(\rho)$
  if there exist constants $A, B > 0$ such that the following formula,
  called the \textit{frame inequality}, holds:
  \begin{equation}
  \label{frame_def}
    \forall f \in L^2(\rho), \;\; A \| f \|^2 \le
    \sum_{(a,b) \in \Lambda} |\ang{f, e_{a,b}}|^2 \le B \| f \|^2.
  \end{equation}
  The constants $A$ and $B$ are the lower and upper \textit{frame bounds}.
  \end{definition}
  
  \vspace{1em}

\begin{definition}
$E(\Lambda)$
 is a \textit{Riesz basis} for $L^2(\rho)$ if
$\{ e_{a,b} \}$ is a frame, and if
  there exist constants
      $A, B > 0$ such that the following inequality,
      called the \textit{Riesz sequence inequality}, holds for any sequence
      of constants $\{ c_{a,b} \}$, $(a,b) \in \Lambda$, with only a finite
      number of nonzero values:
      \begin{equation}
      \label{Riesz_def}
        A \sum_{(a,b) \in \Lambda} |c_{a,b}|^2 \le
        \Bigl\| \sum_{(a,b) \in \Lambda}  c_{a,b} e_{a,b} \Bigr\|^2
        \le B \sum_{(a,b) \in \Lambda} | c_{a,b} |^2.
      \end{equation} \qedhere
      \end{definition}
      \vspace{1em}
      
 We can think of this inequality as saying that the norm of a finite linear
 combination of elements of the Riesz basis does not get too small or
 too large relative to its coefficients.
 
 \vspace{1em}
 
 \begin{definition}
$E(\Lambda)$ is an \textit{orthonormal basis} for $L^2(\rho)$ if
it is orthonormal, and satisfies \textit{Parseval's Equality}
\begin{equation}
\label{ONB_def}
  \forall f \in L^2(\rho), \;\|f\|^2 = \sum_{(a,b) \in \Lambda} | \ang{ f, e_{a,b}} |^2.
\end{equation} \qedhere
\end{definition}
%\vspace{1em}

We also note that $\Lambda-t$, $t\in{\mathbb R}^d$, are always frames/Riesz bases/orthonormal bases for $L^2(\rho)$ if the corresponding $\Lambda$ is. We may assume without loss of generality that $(0,0)\in\Lambda$.  This will be assumed throughout the paper.

\subsection{Projections}

Studying the projections of the spectrum onto axes will be our main tool for studying the spectrality of additive measure in later sections.  

%We can restate Equation \ref{int_def} to say that 
%in an additive space, the
%integral of $f$ is equal to half the sum of
%the integrals of its $x$ and $y$ projections.
%Furthermore, these projections are essentially functions in
%the component spaces.
%Similarly, inner products and norms in an additive
%space are calculated using the inner product or norm operations of the component spaces,
%applied to the $x$ and $y$ projections of the original functions.

%Thus the projection of an exponential function is the
%equivalent of the geometric projections of the pair $(a,b)$
%(hence the name \textit{projection}).
%
%From the definitions of $f_x$ and $f_y$, we see that the
%projection operator is linear, so that for example
%\[
%(a f + b g + c h)_x = a f_x + b g_x + c h_x,
%\]
%where $a, b, c$ are scalars and $f, g, h$ are functions.
%
%We can apply projections to exponent pairs as well as to functions:
%\vspace{1em}

\begin{definition}
  If $\Lambda \subset \R^2$ and $(a,b) \in \Lambda$,
    then the $x$ projection of $(a,b)$ is $a$
    and the $y$ projection is $b$.  The
    \textit{projection functions} $\pi_x$
    and $\pi_y$
    are defined as $\pi_x(a,b) = a$ and
    $\pi_y(a,b) = b$.
  $\Lambda_x$ is the set $\pi_x(\Lambda)$,
    and $\Lambda_y$ is $\pi_y(\Lambda)$.
  That is,
  \[
    \Lambda_x = \{ a \in \R : (a,b) \in \Lambda 
      \text{ for some } b \in \R \},
    \]
    and similarly for $\Lambda_y$.
\end{definition}
\vspace{1em}

Note that we freely apply $\pi_x$ and $\pi_y$
to both pairs and sets of pairs.
Also, if $\Lambda$ is a set of pairs,
and $a \in \Lambda_x$, then we write
$\pi_x^{-1}(a)$ to mean the set of pairs in $\Lambda$
having $a$ as their first member.

If $E(\Lambda)$ is a set of functions in $L^2(\rho)$,
then we can
consider
$E(\Lambda_x)$ and $E(\Lambda_y)$ to be functions in the component spaces $L^2(\mu)$ and
$L^2(\nu)$.  Thus it makes sense to ask, for example,
whether the projections of a frame in an additive space are also frames in the component spaces.

Projections of a linear combination of exponential functions
take an especially interesting form, used in later sections.
Let $f$ be a (finite or infinite) linear combination of the form
\[
  f = \sum_{(a,b) \in \Lambda} c_{a,b} e_{a,b},
\]
where each $c_{a,b} \in \C$.  One of its projections will be equal to 
\[
  f_x = \sum_{a \in \Lambda_x}
            \Bigl( \sum_{(a,b) \in \pi_x^{-1}(a)} c_{a,b}
            \Bigr) e_a.
          \]

Here we can think of the coefficients as being placed
on a two-dimensional grid, with $c_{a,b}$ appearing
at the point $(a,b)$ in the grid.  
The coefficient of $e_a$, namely
$            \sum_{(a,b) \in \pi_x^{-1}(a)} c_{a,b}$, 
is just the sum of all coefficients in one column
of this grid.

\medskip

%Figure \ref{fig_proj_lin_comb} has an example.
%In that figure, the $x$ projection of 
%\[
%  4 e_{a,b} - e_{a,c} + 19 e_{d,c} + 3e_{d,f}
%\]
%is 
%\[
%  3e_a + 22 e_d,
%\] 
%and the
%$y$ projection is 
%\[
%  4 e_b + 3 e_f + 18 e_c.
%\]
%
%
%\begin{figure}[h]
%  \centering
%  \vspace{1em}
%  \begin{tikzpicture}
%  \draw[thick,->] (0,0) -- (0,4);
%  \draw[thick,->] (0,0) -- (4,0);
%  \draw[dotted] (0,1) -- (1,1);
%  \draw[dotted] (0,2) -- (3,2);
%  \draw[dotted] (0,3) -- (3,3);
%  \draw[dotted] (1,0) -- (1,3);
%  \draw[dotted] (3,0) -- (3,3);
%  \filldraw[black] (0,1) circle (2pt) node[anchor=east] {$4e_b$};
%  \filldraw[black] (0,2) circle (2pt) node[anchor=east] {$3e_f$};
%  \filldraw[black] (0,3) circle (2pt) node[anchor=east] {$18e_c$};
%  \filldraw[black] (1,0) circle (2pt) node[anchor=north] {$3e_a$};
%  \filldraw[black] (3,0) circle (2pt) node[anchor=north] {$22e_d$};
%  \filldraw[black] (1,1) circle (2pt) node[anchor=south west] {$4e_{a,b}$};
%  \filldraw[black] (1,3) circle (2pt) node[anchor=south west] {$-1e_{a,c}$};
%  \filldraw[black] (3,2) circle (2pt) node[anchor=south west] {$3e_{d,f}$};
%  \filldraw[black] (3,3) circle (2pt) node[anchor=south west] {$19e_{d,c}$};
%\end{tikzpicture}
%\caption{Projections of the Linear Combination $4e_{a,b} - e_{a,c} + 19e_{d,c} + 3e_{d,f}$}
%\label{fig_proj_lin_comb}
%  %\vspace{1em}
%\end{figure}
%
%
%
%\subsection{Multiplicity}

\begin{definition}
  Let $\Lambda$ be the set of exponents for a set of exponential functions
  on an additive space.  
  The \textit{multiplicity} of $\Lambda$ is the largest number of points
  on any vertical or horizontal line through $\Lambda$, if such a maximum
  exists.
  We say that $\Lambda$ has $\textit{bounded multiplicity}$
  in this case.
  Similarly, the multiplicity of $u \in \Lambda_x$ (or $v \in \Lambda_y$) is the
  number of points on a vertical (or horizontal) line through $u$ (or $v$), if this number is finite.
\end{definition}
%\vspace{1em}

\section{Proof of Theorem \ref{th_nonoverlapping}}

Throughout the paper, we will write $A\asymp B$ if  there exist universal constants $c,C>0$ such that $cB\le A\le CB$.
We now study the non-overlapping case and prove Theorem \ref{th_nonoverlapping}. We will restate our theorem below with more detail about the structure of our Riesz basis. \vspace{1em}

\begin{theorem_th_nonoverlapping}
Suppose  that we have a non-overlapping symmetric additive space  whose measure is
$$
\rho = \frac12\left(\mu\times \delta_0 + \delta_0\times \mu\right),
$$
where $\mu$ is a compactly supported Borel probability measure on $\R^1$.
  Then if $L^2(\mu)$ admits a Riesz basis $E(\Lambda_{\mu})$, then $L^2(\rho)$ also admits a Riesz basis of the form 
  $$
\Lambda_{\rho} =  \{(\lambda,\lambda):  \lambda\in \Lambda_{\mu} \}+ \{(0,0), (0,\tau)\}
  $$ 
 for some $\tau\in{\mathbb R}\setminus\{0\}$. 
 \end{theorem_th_nonoverlapping}

\begin{proof}
We first show that $E(\Lambda_{\rho})$ is a frame for some $\tau\in{\mathbb R}$. Given any $F\in L^2(\rho)$,  we let $f = F(x,0)$ and $g = F(0,y)$ and $f,g\in L^2(\mu).$ 
$$
\begin{aligned}
4\sum_{(a, b)\in\Lambda_{\rho}} |\langle F,e_{a, b}\rangle|^2 =& \sum_{\lambda\in\Lambda_{\mu}} |\langle f,e_{\lambda}\rangle+\langle g,e_{\lambda}\rangle|^2+ \sum_{\lambda\in\Lambda_{\mu}} |\langle f,e_{\lambda}\rangle+\langle g,e_{\lambda+\tau}\rangle|^2\\
=&\sum_{\lambda\in\Lambda_{\mu}} |\langle f+g,e_{\lambda}\rangle|^2+ \sum_{\lambda\in\Lambda_{\mu}} |\langle f+g e_{-\tau},e_{\lambda}\rangle|^2\\
 \asymp& \int |f+g|^2 + |f+ge_{-\tau}|^2d\mu,
\end{aligned}
$$
 and the last estimate follows from the frame inequality of $E(\Lambda_{\mu})$ for $L^2(\mu)$. Let 
\begin{equation}\label{eqM}
{\bf M}_{\tau} = \begin{bmatrix} 1 & 1\\1 & e_{-\tau}\end{bmatrix}, \ {\bf v}(x) = \begin{bmatrix} f(x) \\ g(x) \end{bmatrix}.
\end{equation}
 Since $\mu$ is compactly supported with $0$ outside its support, we can find $\epsilon>0$ and  $\tau>0$ such that
 \begin{enumerate}
  \item $0<\epsilon<\tau\cdot x<1-\epsilon<1$ for all $x>0$ in the support of $\mu$, and 
  \item  $-1+\epsilon<\tau\cdot x<-\epsilon<0$ for all $x<0$ in the support of $\mu$.
  \end{enumerate} 
Indeed, if supp$(\mu)\subset [-M,-m]\cup[m,M]$, we can take  $\tau= \frac{1}{2M}$ and $\epsilon = \min(\frac12, \frac{m}{2M})$.   For this $\tau$, ${\bf M}$ is an invertible matrix and
 $$
\| {\bf M}_{\tau}{\bf v}\| \asymp \|{\bf v}\|,
 $$ 
where the implicit constant is equal to the largest and the smallest eigenvalue of ${\bf M}_{\tau}^{\ast} {\bf M}_{\tau}$. A calculation shows the eigenvalues are equal to 
$$
2 \pm|1+e_{-\tau}| = 2 \pm2\cos(\pi \tau x).
$$
In any case, all these eigenvalues are in the intervals $(2 (1-\cos (\pi \epsilon)), 4)$, which is uniformly away from zero. Hence, 
$$
\begin{aligned}
\int |f+g|^2 + |f+ge_{-\tau}|^2d\mu =& \int \|{\bf M}_{\tau}{\bf v}\|^2d\mu\\
 \asymp &\int \|{\bf v}\|^2d\mu \\
 =& \int |f|^2d\mu+\int |g|^2d\mu = 2\int |F|^2d\rho.\\
\end{aligned}
$$
This shows $E(\Lambda_{\rho})$ is a frame for some $\tau\in{\mathbb R}$. 

\vspace{1em}

To complete the proof, we need to show that $E(\Lambda_{\rho})$ is a Riesz sequence for the $\tau$ we specified in the previous paragraph. For any constants $c_{a,b}$, 
$(a,b)\in\Lambda_{\rho}$, with only finitely many non-zero,  we consider the sum
$$
2\int\left| \sum_{(a,b)\in\Lambda_{\rho}} c_{a,b}e_{a,b}\right|^2d\rho = \int\left| \sum_{\lambda\in\Lambda_{\mu}}c_{\lambda}e_{\lambda}+\sum_{\lambda\in\Lambda_{\mu}}c_{\lambda'}e_{\lambda}\right|^2d\mu+\int\left| \sum_{\lambda\in\Lambda_{\mu}}c_{\lambda}e_{\lambda}+\sum_{\lambda\in\Lambda_{\mu}}c_{\lambda'}e_{\lambda+\tau}\right|^2d\mu,
$$
where we denote $c_\lambda = c_{(\lambda,\lambda)}$ and $c_{\lambda'} = c_{(\lambda,\lambda+\tau)}$. Let $f(x) =\sum_{\lambda\in\Lambda_{\mu}}c_{\lambda}e_{\lambda} $ and $g(x)=\sum_{\lambda\in\Lambda_{\mu}}c_{\lambda'}e_{\lambda}$. Then the above expression is equal to 
$$
\int|f+g|^2+ |f+e_{\tau}g|^2d\mu = \int \|{\bf M}_{-\tau}{\bf v}\|^2d\mu.
$$
Using the same notation as in (\ref{eqM}), with the same argument,  we have 
$$
\begin{aligned}
 \int \|{\bf M}_{-\tau}{\bf v}\|^2d\mu\asymp&  \int \|{\bf v}\|^2d\mu \\
 = &\int |f|^2d\mu+\int |g|^2d\mu \\
 \asymp &\sum_{\lambda}|c_{\lambda}|^2+ \sum_{\lambda'}|c_{\lambda'}|^2 = \sum_{(a,b)\in\Lambda_{\rho}}|c_{a,b}|^2.\\
\end{aligned}
$$
The last $\asymp$ above follows from the fact that $E(\Lambda_{\mu})$ is a Riesz sequence of $L^2(\mu)$. 

\vspace{1em}

We have shown that $E(\Lambda_{\rho})$ is both a frame and a Riesz sequence. Hence it is a Riesz basis. The proof is complete. 
\end{proof}

We remark that the proof also holds if the component measures are different and zero is not in their supports, but they share the same Riesz basis spectrum.

\section{Frames, bases and their projections}

In this section, the component measures are all general continuous probability measures.  We will study some general properties of frames and Riesz bases of an additive space and the relationship of the frames and Riesz bases with their projections.  We will later use these theorems to construct an orthonormal basis or rule out the existence of an orthonormal basis for the additive spaces with Lebesgue component measures. 

\medskip

Our first theorem says that a frame in an additive space has bounded multiplicity.

\vspace{1em}

\begin{theorem}
  \label{bounded_multiplicity}
  Let $E(\Lambda)$ be a frame for an additive space with measure $\rho$ and  continuous component measures $\mu$ and $\nu$ defined in (\ref{rho_def}). Then
  \begin{enumerate}
  \item $\Lambda$ has bounded multiplicity.
  \item   $E(\Lambda_x)$ and $E(\Lambda_y)$
  are frames for $L^2(\mu)$ and $L^2(\nu)$ respectively. 
  \end{enumerate}
 \end{theorem}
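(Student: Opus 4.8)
The plan is to prove both parts at once by testing the frame inequality \eqref{frame_def} against functions that live entirely on one axis. Fix $a_0 \in \Lambda_x$ and let $F \in L^2(\rho)$ be the function with $F_x = e_{a_0}$ and $F_y = 0$. Since $\mu$ and $\nu$ are continuous, the origin is a $\rho$-null point, so prescribing $F$ independently on the two axes causes no conflict, and $\|F\|^2 = \tfrac12 \|e_{a_0}\|_{L^2(\mu)}^2 = \tfrac12$ because $\mu$ is a probability measure. For this $F$ one has $\langle F, e_{a,b}\rangle = \tfrac12 \langle e_{a_0}, e_a\rangle_{L^2(\mu)}$, a quantity that does not depend on $b$; in particular each point $(a_0,b)$ on the vertical line $x = a_0$ contributes $\langle F, e_{a_0,b}\rangle = \tfrac12 \langle e_{a_0}, e_{a_0}\rangle_{L^2(\mu)} = \tfrac12$.

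For part 1, write $m(a_0) = \#\,\pi_x^{-1}(a_0)$ for the number of points of $\Lambda$ on the line $x = a_0$. Keeping only those $m(a_0)$ terms and discarding the rest, the upper frame inequality applied to the $F$ above yields
\[
\tfrac14\, m(a_0) \;\le\; \sum_{(a,b)\in\Lambda} |\langle F, e_{a,b}\rangle|^2 \;\le\; B\|F\|^2 = \tfrac{B}{2},
\]
so $m(a_0) \le 2B$ for every $a_0 \in \Lambda_x$. The identical computation on the $y$-axis with $F_x = 0$, $F_y = e_{b_0}$ bounds the number of points on every horizontal line by $2B$, so $\Lambda$ has multiplicity at most $2B$.

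For part 2, I would test \eqref{frame_def} with $F_x = f$, $F_y = 0$ for an arbitrary $f \in L^2(\mu)$, so that $\|F\|^2 = \tfrac12 \|f\|_{L^2(\mu)}^2$ and $\langle F, e_{a,b}\rangle = \tfrac12 \langle f, e_a\rangle_{L^2(\mu)}$ depends on $(a,b)$ only through $a$. Collecting the $m(a)$ identical terms on each vertical line converts the frame inequality into
\[
\tfrac{A}{2}\|f\|_{L^2(\mu)}^2 \;\le\; \tfrac14 \sum_{a \in \Lambda_x} m(a)\,|\langle f, e_a\rangle_{L^2(\mu)}|^2 \;\le\; \tfrac{B}{2}\|f\|_{L^2(\mu)}^2.
\]
Now the two-sided bound $1 \le m(a) \le 2B$ from part 1 lets me strip the weights: using $m(a) \ge 1$ in the upper estimate gives $\sum_{a\in\Lambda_x}|\langle f, e_a\rangle|^2 \le 2B\|f\|^2$, while using $m(a) \le 2B$ in the lower estimate gives $\sum_{a\in\Lambda_x}|\langle f, e_a\rangle|^2 \ge \tfrac{A}{B}\|f\|^2$. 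Hence $E(\Lambda_x)$ is a frame for $L^2(\mu)$ with bounds $A/B$ and $2B$, and the symmetric computation handles $E(\Lambda_y)$ in $L^2(\nu)$.

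The argument is short, so the only real subtlety I anticipate is the bookkeeping that links the sum over $\Lambda$ to the sum over $\Lambda_x$: because both projections of a single $e_{a,b}$ see only one coordinate, all points on a fixed vertical line contribute the same value, which is precisely what generates the multiplicity weights $m(a)$. It is worth emphasizing that part 1 is genuinely needed for the \emph{lower} frame bound in part 2—without a ceiling on $m(a)$ one cannot pass from the weighted sum back to the unweighted one—whereas the upper frame bound, and all of part 1, use only the trivial inequality $m(a) \ge 1$ together with the fact that a single exponential has $L^2(\mu)$-norm one, which forces the diagonal coefficient $\langle e_{a_0}, e_{a_0}\rangle_{L^2(\mu)}$ to equal $1$.
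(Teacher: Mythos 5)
Your proof is correct and follows essentially the same route as the paper: test the frame inequality against functions supported on a single axis, observe that all points on a vertical line contribute identically to bound the multiplicity, and then use the two-sided bound $1 \le m(a) \le 2B$ to pass between the weighted and unweighted sums. The only difference is cosmetic—you track the factors of $\tfrac12$ slightly more carefully than the paper does.
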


\begin{proof}
  Let $\Lambda$ be a set of exponents for a frame for an additive space.
  Then there is some $A, B > 0$ 
  such that 
  for every $f \in L^2(\rho)$, the frame inequality holds (Equation \eqref{frame_def}):
  \[
    A \|f\|^2 \le
    \sum_{(a,b) \in \Lambda} | \ang{f, e_{a,b}} |^2 
    \le 
    B \|f\|^2.
  \]

  Fix $u \in \Lambda_x$, and choose $f$ to be
  $e_u$ on the $x$-axis and $0$ on the $y$-axis. (Its value at the origin is overspecified but irrelevant since the measures are continuous).
  That is, $f_x = e_u$ and $f_y = 0$.
      Then
  \[
    \|f\|^2 = \tfrac 1 2  \| e_u \|^2 + \tfrac 1 2 \| 0 \|^2 = \tfrac 1 2,
  \]
  and
  \[
    \ang{f, e_{a,b}} = \tfrac 1 2 \ang{e_u, e_a} + \tfrac 1 2 \ang{0, e_b}
    = \tfrac 1 2 \ang{e_u, e_a},
  \]
  and so from our frame inequality we have
  \[
    \sum_{(a,b) \in \Lambda} | \ang{e_u, e_a} |^2 
    \le B.
  \]
    Among the terms of this sum are all of
    the terms where $a = u$, and since every term
    is nonnegative, we have
    \[
    \sum_{(u,b) \in \Lambda} | \ang{e_u, e_u} |^2 
    \le B.
    \]
    But $\ang{e_u, e_u} = 1$, and so this inequality
    says that the multiplicity of $u$ is 
    bounded by $B$.
    A similar argument applies to
    $v \in \Lambda_y$. This proves part 1 of the theorem.
    
    \vspace{1em}
    
    To prove part 2 of the theorem, we let $M$ be the multiplicity of  $\Lambda$.  We show that $E(\Lambda_x)$ is a frame for $L^2(\mu)$.
  Let $g$ be any function in $L^2(\mu)$. Define $f \in L^2(\rho)$ such that
  $f_x = g$ and $f_y = 0$.
  Now
  \[
    \ang{f, e_{a,b}} = \tfrac 1 2 \ang{f_x, e_a} + \tfrac 1 2 \ang{f_y, e_b}
    = \tfrac 1 2  \ang{g, e_a} + 0,
  \]
  and
  \[
    \|f\|^2 = \tfrac 1 2 \|f_x\|^2 + \tfrac 1 2 \|f_y\|^2 = \tfrac 1 2 \| g \|^2 + 0.
  \]
  Our frame equation applied to $f$ becomes
  \[
    A \|g\|^2 \le 
     \sum_{(a,b) \in \Lambda} | \ang{g, e_a} |^2
    \le B \|g\|^2,
  \]
but 
  \[
   \sum_{(a,b) \in \Lambda} | \ang{g, e_a} |^2 =   \sum_{a \in \Lambda_x} m_a |\ang{g, e_a}|^2 ,
  \]
  where $m_a$ is the multiplicity of 
  $a \in \Lambda_x$. Since $1 \le m_a \le M$, this means
  \[
    \frac A M \|g\|^2 \le 
    \sum_{a \in \Lambda_x} |\ang{g, e_a}|^2 
    \le B \|g\|^2.
  \]
  Thus $E(\Lambda_x)$ is a frame
  of $L^2(\mu)$.
  The proof for $\Lambda_y$ is similar.
\end{proof}

\vspace{1em}

The previous theorem says that the projections of a frame are still frames in their component
spaces.  This is the  converse of Lev's Theorem 1.1 in \cite{Lev}, which provides a way to construct a frame for the additive space from
the frames of its component spaces.

\vspace{1em}

For the existence of a Riesz basis in an additive spaces and its relationship with its projections,  as we stated in Theorem \ref{projections_of_Riesz_p}, we know that if  $E(\Lambda)$ is a Riesz basis in an additive
  space, then at least one of  $E(\Lambda_x)$ and
  $E(\Lambda_y)$ is not  a Riesz basis for its
  component space. We give the proof in the next section.  
  
  \vspace{1em}
  
  Assuming Theorem \ref{projections_of_Riesz_p}, we now turn to study the orthonormal basis and its projection for additive spaces. As expected, they are more restrictive. 

\vspace{1em}
\begin{theorem}
  \label{multiplicity_one}
  Let $E(\Lambda)$ be an orthonormal basis for an additive space. Then
 \begin{enumerate} 
  \item $\Lambda$ has multiplicity one.
  \item Suppose that $\mu$ and $\nu$ are Lebesgue measures supported on intervals of length one. Then $\Lambda$ cannot be a subset of ${\mathbb Z}\times{\mathbb Z}$. 
  \end{enumerate}
   \end{theorem}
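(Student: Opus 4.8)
The plan is to prove the two parts separately, deriving part~2 from part~1 together with Theorems \ref{bounded_multiplicity} and \ref{projections_of_Riesz_p}. For part~1, I would argue by contradiction and exploit orthogonality directly. Suppose $\Lambda$ has two distinct points $(a,b_1)$ and $(a,b_2)$ on a common vertical line, so $b_1\neq b_2$. Orthogonality of $e_{a,b_1}$ and $e_{a,b_2}$ in $L^2(\rho)$ expands, via the inner product formula $\langle e_{a,b},e_{u,v}\rangle_{L^2(\rho)} = \tfrac12\langle e_a,e_u\rangle_{L^2(\mu)} + \tfrac12\langle e_b,e_v\rangle_{L^2(\nu)}$, into $\tfrac12\langle e_a,e_a\rangle_{L^2(\mu)} + \tfrac12\langle e_{b_1},e_{b_2}\rangle_{L^2(\nu)} = 0$. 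Since $\mu$ is a probability measure, $\langle e_a,e_a\rangle_{L^2(\mu)}=1$, forcing $\langle e_{b_1},e_{b_2}\rangle_{L^2(\nu)} = -1$, i.e. $\int e^{2\pi i (b_1-b_2)y}\,d\nu(y) = -1$. But the integrand is unimodular, so this identity requires $e^{2\pi i(b_1-b_2)y} = -1$ for $\nu$-almost every $y$; as $b_1\neq b_2$, that equation holds only on a discrete, hence countable, set of $y$, which is $\nu$-null because $\nu$ is continuous. This contradiction, together with the symmetric argument for horizontal lines (using continuity of $\mu$), shows that no line meets $\Lambda$ in more than one point, so the multiplicity is one.

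For part~2, I would assume for contradiction that $\Lambda\subseteq\Z\times\Z$, so that $\Lambda_x,\Lambda_y\subseteq\Z$. Since $\mu$ and $\nu$ are Lebesgue measures on intervals of unit length, the family $\{e_n\}_{n\in\Z}$ is the standard Fourier orthonormal basis for each of $L^2(\mu)$ and $L^2(\nu)$. Thus $E(\Lambda_x)$ is a subfamily of an orthonormal basis, and in particular an orthonormal system. On the other hand, an orthonormal basis is a Riesz basis and hence a frame, so Theorem \ref{bounded_multiplicity} applies and guarantees that $E(\Lambda_x)$ is a frame for $L^2(\mu)$. A frame is complete, and a complete orthonormal system is an orthonormal basis; therefore $\Lambda_x=\Z$ and $E(\Lambda_x)$ is itself an orthonormal basis, in particular a Riesz basis, for $L^2(\mu)$. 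The identical reasoning gives that $E(\Lambda_y)$ is a Riesz basis for $L^2(\nu)$.

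Having both projections be Riesz bases contradicts Theorem \ref{projections_of_Riesz_p}, which asserts that at least one projection of a Riesz-basis spectrum fails to be a Riesz basis for its component space, and this contradiction completes part~2. I expect part~1 to be essentially routine; the only point requiring care is the appeal to continuity of the component measures to rule out $\langle e_{b_1},e_{b_2}\rangle = -1$. The conceptual heart of the argument is part~2, where the main work is already bundled into Theorems \ref{bounded_multiplicity} and \ref{projections_of_Riesz_p}: the new observation is simply that $\Lambda\subseteq\Z^2$ forces the projected exponentials to be orthonormal subfamilies of the Fourier basis, which upgrades the frame property from Theorem \ref{bounded_multiplicity} all the way to a full orthonormal, hence Riesz, basis, thereby triggering the obstruction.
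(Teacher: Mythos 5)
Your proposal is correct and follows essentially the same route as the paper: part 1 reduces orthogonality of two points on a common line to the extremal identity $\int e^{2\pi i(b_1-b_2)y}\,d\nu(y)=-1$, which forces $\nu$ to be carried by a countable set and contradicts continuity, and part 2 combines Theorem \ref{bounded_multiplicity} (projections of a frame are frames) with completeness of a frame inside the integer Fourier basis to force $\Lambda_x=\Lambda_y=\Z$, contradicting Theorem \ref{projections_of_Riesz_p}. No gaps; the argument matches the paper's proof in both structure and substance.
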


\begin{proof}
  Let $e_{a,b}$ and $e_{c,d}$ be two distinct members of $E$. For orthogonality, we must have
  \[
    \ang{e_{a,b}, e_{c,d}} = 0.
  \]
Letting $\lambda_1 = a - c$ and $\lambda_2 = b - d$, we have
\[
    \int e^{2 \pi i \lambda_1 x} \, d \mu (x) +
    \int e^{2 \pi i \lambda_2 y} \, d \nu (y) = 0.
\]
Suppose that we don't have multiplicity one. Then $\lambda_1=0, \lambda_2\neq 0$, or $\lambda_2=0, \lambda_1\neq 0$. In the first case,  we have
\[
  1 + \int e^{2 \pi i \lambda_2 y} \, d\nu( y) = 0.
\]
Equating the real parts this becomes $$\int (1+\cos (2\pi \lambda_2 y)) d\nu(y) = 0.$$ Notice the integrand is non-negative, so it follows that
$$
1+\cos (2\pi \lambda_2 y) = 0,\ \nu\text{-a.e}.
$$
However, this function equals zero if and only if $y = \frac{3}{4\lambda_2}+ \frac{k}{\lambda_2}$ where $k\in{\mathbb Z}$. This implies that $\nu$ is supported inside these countably many points. This contradicts the assumption that $\nu$ is a continuous measure. The case for $\lambda_2=0, \lambda_1\neq 0$ is similar.  We conclude that neither $\lambda_1$ nor $\lambda_2$ can be zero. Therefore $\Lambda$ has multiplicity one.

\vspace{1em}

We now prove part 2. Suppose $E(\Lambda)$ is an orthonormal basis, and $\Lambda \subseteq \Z \times \Z$.
Then $\Lambda_x \subseteq \Z$ and $\Lambda_y \subseteq \Z$.
However, Theorem \ref{bounded_multiplicity} shows that the projections of a frame
are frames in their component spaces, thus $E(\Lambda_x)$ and $E(\Lambda_y)$ are frames. Since
the set $\{ e^{2 \pi i n x} \}_{n \in \Z}$ 
is known to be an orthonormal basis for an interval of length one,
$\Lambda_x$ and $\Lambda_y$ cannot miss any integer.
That is, $\Lambda_x = \Z$ and $\Lambda_y = \Z$.
This means that $E(\Lambda)$ and its projections are all orthonormal bases.
This is impossible by Theorem 
\ref{projections_of_Riesz_p} because orthonormal bases are Riesz bases.  Hence $\Lambda$ cannot consist of only integer pairs.
\end{proof}

\section{Proof of Theorem \ref{projections_of_Riesz_p}} 
We first provide a heuristic proof. Suppose that $E(\Lambda)$ is a Riesz basis for $L^2(\rho)$ and both of its projections are Riesz bases. 
Let $(u,v)$ be any point in $\Lambda$.
We can consider a function $f\in L^2(\rho)$ such that $f_x = 0$ and $f_y = e_v$.  Then $f$ can be uniquely written as
$$
f = \sum_{(a,b)\in\Lambda} c_{a,b}\, e_{(a,b)}
$$
for some set of constants $\{ c_{a,b} \}_{(a,b) \in \Lambda}$.
Since $f=0$ on the $x$-axis, the vertical sums  $\sum_{(a,b)\in \pi_x^{-1} (x)}c_{a,b} = 0$ for all $x\in \Lambda_x$.
On the $y$-axis, where $f = e_v$, the horizontal sums $\sum_{(a,b)\in \pi_y^{-1} (y)}c_{a,b} = 0$ if $y\ne v$ and equal 1 if $y = v$, for all $y \in \Lambda_y$. Therefore 
$$
1=\sum_{y\in \Lambda_y}\sum_{(a,b)\in \pi_y^{-1} (y)}c_{a,b} =  \sum_{(a,b)\in\Lambda} c_{a,b} = \sum_{x\in \Lambda_x}\sum_{(a,b)\in \pi_x^{-1} (x)}c_{a,b}=0.
$$
This will give us a desired contradiction. However, this is not going to be valid since $c_{a,b}$ is only square-summable, and may not be absolutely summable. Therefore, the interchange of summation cannot be justified.  To overcome this issue, we will restrict our attention to finite sums, and consider the lengths of zigzag paths inside $\Lambda$. 

\subsection{Zigzags}

\begin{definition}
  Let $\Lambda \subset \R^2$.
  A \textit{zigzag} in $\Lambda$
is a finite or infinite sequence of points of the form
\[
(x_1, y_1),
(x_1, y_2),
(x_2, y_2),
(x_2, y_3),
\dots
\]
or of the form
\[
(x_1, y_1),
(x_2, y_1),
(x_2, y_2),
(x_3, y_2),
\dots
\]
A zigzag contains at least one point.
Each point in the zigzag is in $\Lambda$, and
no two consecutive points are identical.
The \textit{length} of a finite zigzag
is the number of points in the sequence minus one (i.e. number of paths inside the zigzag).
\end{definition}

Zigzags are horizontal and vertical. We think of zigzags as being composed of
zigs and zags, and we  will say  that a zig is horizontal and a zag is vertical.

A zigzag may begin with either a zig or a zag,
and a finite zigzag may end with either a zig or a zag.
Every zigzag alternates zigs and zags; this is required
by the definition when it says that no two
consecutive points are identical.

%To get the
%length of a finite zigzag, we can 
%count off the zigs and zags.
%A sequence of one point is a zero-length zigzag.
\vspace{1em}

\begin{definition}
  A \textit{zigzag loop} in $\Lambda$
  is a zigzag in $\Lambda$
  with positive even length
  whose first and last points are identical.
\end{definition}
\vspace{1em}

\begin{definition}
  Let $\Lambda \subset \R^2$.
  If $S \subset \Lambda$, 
  and for any $s \in S$, 
  there are no points in $\Lambda \setminus S$ that are
  on the same horizontal or vertical line 
  as $s$,
  then $S$ is said to be \textit{zigzag complete} in $\Lambda$.
\end{definition}
\vspace{1em}

If $S$ is zigzag complete, then
a zigzag in $\Lambda$ that contains any point
of $S$ must have all of its points in $S$.
That is, $S$ completely contains all of its zigzags.

Another fact will be important for us soon:
If $\Lambda \subset \R^2$,
and $\Lambda$ contains two different zigzags
from point $a \in \Lambda$ to point $b \in \Lambda$,
then $\Lambda$ contains a zigzag loop.

%\vspace{1em}

\subsection{ Riesz bases and zigzags}

This subsection collects the properties of zigzags inside the Riesz bases of an additive space $L^2(\rho)$.  In particular, we will prove that in a Riesz basis, there are no zigzag loops (Proposition \ref{no_loops}), and all zigzags have uniformly bounded length (Proposition \ref{zig_zag_chain}).  

\vspace{1em}

\begin{proposition}
  \label{no_loops}
  If $E(\Lambda)$ is a Riesz basis for an 
  additive space $L^2(\rho)$,
  then $\Lambda$ has no zigzag loop.
\end{proposition}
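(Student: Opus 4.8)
The plan is to exploit that a Riesz sequence is finitely linearly independent: the lower bound in the Riesz sequence inequality (\ref{Riesz_def}) forces any finite combination $\sum_{(a,b)\in\Lambda} c_{a,b} e_{a,b}=0$ in $L^2(\rho)$ to have all $c_{a,b}=0$. Hence it suffices to attach a \emph{nontrivial} finite family of coefficients to the points of a zigzag loop so that the associated combination vanishes in $L^2(\rho)$. Since $\|h\|^2_{L^2(\rho)}=\tfrac12\|h_x\|^2_{L^2(\mu)}+\tfrac12\|h_y\|^2_{L^2(\nu)}$, such a combination $h$ vanishes exactly when \emph{both} projections $h_x$ and $h_y$ vanish, so the whole argument reduces to a bookkeeping statement about the two projections, for which I would use the column/row formula for $f_x$ recorded in Section 2.

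For the construction, suppose $\Lambda$ contains a zigzag loop $p_0,p_1,\dots,p_{2n}=p_0$ with $p_k=(x_k,y_k)$, and assume without loss of generality that the first step is a zag, so that for each $j$ the pair $p_{2j},p_{2j+1}$ shares a column ($x_{2j}=x_{2j+1}$) while each pair $p_{2j+1},p_{2j+2}$ shares a row ($y_{2j+1}=y_{2j+2}$). I would set
\[
  h=\sum_{k=0}^{2n-1}(-1)^k\,e_{x_k,y_k}
\]
and read off its projections. The vertical zags partition the indices $\{0,\dots,2n-1\}$ into column-pairs $\{2j,2j+1\}$ carrying opposite signs $+1,-1$; within each fixed column the contributing indices are a union of such pairs, so every column sum of coefficients cancels and $h_x=0$. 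Symmetrically, the horizontal zigs make every row sum cancel, giving $h_y=0$. Hence $h=0$ in $L^2(\rho)$, independently of whether the loop revisits points.

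The delicate point, and the step I expect to be the main obstacle, is nontriviality: if the loop revisits a point $q$, the net coefficient $\sum_{k:\,(x_k,y_k)=q}(-1)^k$ at $e_q$ could vanish, collapsing $h$ to the trivial combination. I would remove this by first replacing an arbitrary zigzag loop with a \emph{simple} one, that is, one whose columns are pairwise distinct and whose rows are pairwise distinct. This follows from the standard graph-theoretic fact that a graph containing a cycle contains a simple cycle, applied to the bipartite graph whose vertex classes are $\Lambda_x$ and $\Lambda_y$ and in which each point $(a,b)\in\Lambda$ is the edge joining $a$ to $b$. The alternation rule forbids immediate backtracking, so a zigzag loop corresponds to a closed walk of positive length in this graph, which therefore contains a cycle and hence a simple cycle; the simple cycle translates back into a zigzag loop with distinct rows and distinct columns. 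For such a simple loop all $2n$ points are distinct, each net coefficient is exactly $(-1)^k\neq 0$, and the vanishing combination $h$ is genuinely nontrivial. This contradicts the linear independence of the Riesz sequence $E(\Lambda)$, completing the proof.
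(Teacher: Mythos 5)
Your proof is correct and follows essentially the same route as the paper's: assign alternating $\pm 1$ coefficients along the zigzag loop, observe that both the $x$ and $y$ projections cancel so the combination vanishes in $L^2(\rho)$, and contradict the lower bound in the Riesz sequence inequality. Your extra reduction to a simple cycle in the bipartite column/row graph is a sound way to guarantee the coefficient family is nontrivial when the loop revisits points, a detail the paper's proof handles only implicitly by tacitly treating the loop's points as distinct.
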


\begin{proof}
  Since $E(\Lambda)$ is a Riesz basis, we have the Riesz 
  sequence inequality (Equation \eqref{Riesz_def})
  \[
    C \sum_{(a,b) \in \Lambda} |c_{a,b}|^2
  \le
  \Bigl\| \sum_{(a,b) \in \Lambda} c_{a,b}
  e_{a,b} \Bigr\|^2
  \le 
  D \sum_{(a,b) \in \Lambda} |c_{a,b}|^2
\]
for some $C, D > 0$, where  only a finite number of $c_{a,b}$ will
be nonzero. We will show that the inequality cannot be true when
we assign constants $1$ and $-1$ in alternation to
the points of a zigzag loop.

  Suppose $\Lambda$ has a zigzag loop of the form
  \[
  (x_1,y_1),
  (x_1,y_2),
  (x_2,y_2),
  \dots,
  (x_n,y_n),
\]
where $(x_1,y_1) = (x_n,y_n)$.
Note that this begins with a vertical zag;
the proof for a loop beginning with a zig is similar. Drop $(x_n,y_n)$ from the sequence, and assign
constants to points in the loop by alternating
$1, -1, 1, -1, \dots$ as the loop is traversed.
Assign the constant $0$ to all other points of
$\Lambda$.
Note that the last point in the sequence is now
$(x_{n-1}, y_n)$, and $y_1 = y_n$ since the original
sequence was a loop.

Now the linear combination can be written
\[
  \sum_{(a,b) \in \Lambda} c_{a,b} e_{a,b} =
  e_{x_1,y_1} - e_{x_1,y_2} + e_{x_2,y_2} - \dots
  -e_{x_{n-2},y_{n-1}} + e_{x_{n-1},y_{n-1}} - e_{x_{n-1},y_n}.
\]
The $x$ projection is
\[
  (e_{x_1} - e_{x_1}) + (e_{x_2} - e_{x_2}) + \dots
  + (e_{x_{n-1}} - e_{x_{n-1}}) = 0
\]
and the $y$ projection is 
\[
  e_{y_1} + (- e_{y_2} + e_{y_2}) + \dots +
  (-e_{y_{n-1}} + e_{y_{n-1}}) - e_{y_n} = e_{y_1} - e_{y_n} = 0.
\]
See Figure \ref{fig_proj_zigzag_loop} for an example. Because of this cancellation,
\[
  \Bigl\| \sum_{(a,b) \in \Lambda} c_{a,b}
  e_{a,b} \Bigr\|^2 = \frac 1 2 \, | 0 |^2  + \frac 1 2 \, | 0 |^2  = 0,
\]
But this is a contradiction to the lower bound of the Riesz inequality since  $\sum_{(a,b) \in \Lambda} |c_{a,b}|^2>0$. Therefore $\Lambda$ cannot contain a zigzag loop.
\end{proof}

\vspace{1em}

\begin{figure}[h]
  \centering
  \begin{tikzpicture}[scale=.7]
  \draw[thick,->] (0,0) -- (0,4);
  \draw[thick,->] (0,0) -- (4,0);
  \draw[dotted] (0,1) -- (3,1);
  \draw[dotted] (0,3) -- (3,3);
  \draw[dotted] (1,0) -- (1,3);
  \draw[dotted] (3,0) -- (3,3);
  \filldraw[black] (0,1) circle (2pt) node[anchor=east] {$0e_b$};
  \filldraw[black] (0,3) circle (2pt) node[anchor=east] {$0e_c$};
  \filldraw[black] (1,0) circle (2pt) node[anchor=north] {$0e_a$};
  \filldraw[black] (3,0) circle (2pt) node[anchor=north] {$0e_d$};
  \filldraw[black] (1,1) circle (2pt) node[anchor=south west] {$1e_{a,b}$};
  \filldraw[black] (1,3) circle (2pt) node[anchor=south west] {$-1e_{a,c}$};
  \filldraw[black] (3,1) circle (2pt) node[anchor=south west] {$-1e_{d,b}$};
  \filldraw[black] (3,3) circle (2pt) node[anchor=south west] {$1e_{d,c}$};
\end{tikzpicture}
\caption{Projections of a Zigzag Loop\\
with Constants $1,-1,\dots$}
\label{fig_proj_zigzag_loop}
\end{figure}
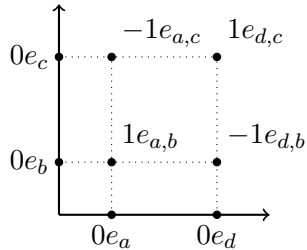

\vspace{1em}

%\subsection{Zigzags in a Riesz Basis are Uniformly Bounded}

\begin{proposition}
  \label{zig_zag_chain}
  If $E(\Lambda)$ is a Riesz basis for an additive
  space, then the zigzags in $\Lambda$ are uniformly
  bounded in length.  That is, 
  there exists a $B > 0$ such that
  no zigzag in $\Lambda$ is longer than $B$.
\end{proposition}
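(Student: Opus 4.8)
The plan is to mimic the coefficient assignment used in Proposition~\ref{no_loops}. Given any finite zigzag in $\Lambda$ with points $p_1, p_2, \dots, p_N$, I would set $c_{p_k} = (-1)^{k+1}$ (the alternating pattern $1,-1,1,\dots$) along the zigzag and $c_{a,b} = 0$ at every other point of $\Lambda$, then feed the finite linear combination $f = \sum_{(a,b)\in\Lambda} c_{a,b}\,e_{a,b}$ into the lower Riesz sequence bound from \eqref{Riesz_def}. Since $\sum_{(a,b)\in\Lambda}|c_{a,b}|^2 = N$, the point is to contrast this growing quantity with an upper bound on $\|f\|^2$ that does \emph{not} grow with $N$.

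The heart of the argument is computing the two projections $f_x$ and $f_y$ and observing that they almost telescope, exactly as in the loop case, except that boundary terms survive. Two consecutive zigzag points joined by a vertical zag share an $x$-coordinate and carry opposite coefficients, so they cancel in $f_x$; two consecutive points joined by a horizontal zig share a $y$-coordinate and opposite coefficients, so they cancel in $f_y$. Because a zigzag is a simple alternating path, these pairings consume all but the two endpoints, so that in each projection at most two exponentials remain uncancelled. Using that $\mu$ and $\nu$ are probability measures, hence $\|e_a\|_{L^2(\mu)} = \|e_b\|_{L^2(\nu)} = 1$, the triangle inequality gives that each surviving projection has norm at most $2$, whence
\[
  \Bigl\| \sum_{(a,b)\in\Lambda} c_{a,b}\,e_{a,b} \Bigr\|^2 = \tfrac12 \|f_x\|_{L^2(\mu)}^2 + \tfrac12 \|f_y\|_{L^2(\nu)}^2 \le 4 .
\]

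Combining this with the lower Riesz bound $C\sum|c_{a,b}|^2 \le \|f\|^2$ yields $C\cdot N \le 4$, so $N \le 4/C$. Thus every zigzag has at most $4/C$ points, and its length is at most $4/C - 1$; taking $B = 4/C$ gives the uniform bound. This also rules out infinite zigzags, since an infinite zigzag would contain finite initial segments of unbounded length, contradicting the bound.

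The step I expect to require the most care is the telescoping bookkeeping: one must check that, regardless of the parity of $N$ and of whether the path begins with a zig or a zag, the cancellation always leaves at most two uncancelled exponentials in each projection. This is precisely the feature separating an open zigzag from the closed loop of Proposition~\ref{no_loops}, where \emph{every} term cancelled and the norm was forced to $0$; here the leftover boundary exponentials keep the norm strictly positive yet still uniformly bounded, which is exactly what makes the linear-in-$N$ lower bound collide with a constant upper bound.
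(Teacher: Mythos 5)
Your proposal is correct and follows essentially the same route as the paper's proof: the alternating $\pm 1$ assignment along the zigzag, the telescoping cancellation in the two projections leaving only the endpoint terms, and the resulting collision between the linear-in-$N$ lower Riesz bound and a constant upper bound on the norm. The only differences are cosmetic (your constant is $4$ where the paper gets $2$, and you count points rather than edges), so nothing further is needed.
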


\begin{proof}
This proof is very similar to our proof that
Riesz bases contain no zigzag loops (Proposition \ref{no_loops}). We assume that we have
a zigzag of arbitrary length $N$.  We assign
$1, -1, 1, -1, \dots$ to the functions corresponding
to the zigzag, and $0$ to all other functions.
Again our goal is to force the $x$
and $y$ projections of the linear combination to be
``too small'', in the sense of violating the Riesz sequence inequality
\[
  \label{Riesz_seq_inequality}
    C \sum_{(a,b) \in \Lambda} |c_{a,b}|^2
  \le
  \Bigl\| \sum_{(a,b) \in \Lambda} c_{a,b}
  e_{a,b} \Bigr\|^2
  \le 
  D \sum_{(a,b) \in \Lambda} |c_{a,b}|^2.
\]
Again both the $x$ and $y$ projections have pairs
of functions of the form $e_{x_i} - e_{x_i}$
or $e_{y_i} - e_{y_i}$, which
cancel.

But the cancellation is not
total.  
Consider the zigzag in
Figure \ref{fig_proj_zigzag}.
As in the proof of Proposition \ref{no_loops}, we have
\[
  (e_{x_1} - e_{x_1}) + (e_{x_2} - e_{x_2}) + \dots
  + (e_{x_{n-1}} - e_{x_{n-1}}) = 0
\]
and
\[
  e_{y_1} + (- e_{y_2} + e_{y_2}) + \dots +
  (-e_{y_{n-1}} + e_{y_{n-1}}) - e_{y_n} = e_{y_1} - e_{y_n}.
\]
Here we don't know that $y_1 = y_n$, and so
we have
\[
  \Bigl\| \sum_{(a,b) \in \Lambda} c_{a,b}
  e_{a,b} \Bigr\|^2 = \frac 1 2 \, | 0 |^2 + \frac 1 2 \,
   | e_{y_1} - e_{y_n} |^2.
\]
Note that $e_{y_1} - e_{y_n}$ is a difference of two points on a
unit circle, and so the magnitude of this difference is
no more than 2.  Hence
\[
  \frac 1 2 \,| e_{y_1} - e_{y_n} |^2 \le 2.
\]

\begin{figure}
  \centering
\begin{tikzpicture}[scale=.6]
  \draw[thick,->] (0,0) -- (0,6);
  \draw[thick,->] (0,0) -- (5,0);
  \draw[dotted] (0,1) -- (1,1);
  \draw[dotted] (0,2) -- (2,2);
  \draw[dotted] (0,3) -- (3,3);
  \draw[dotted] (0,4) -- (4,4);
  \draw[dotted] (0,5) -- (4,5);
  \draw[dotted] (1,0) -- (1,2);
  \draw[dotted] (2,0) -- (2,3);
  \draw[dotted] (3,0) -- (3,4);
  \draw[dotted] (4,0) -- (4,5);
  \filldraw[black] (0,1) circle (2pt) node[anchor=east] {$1$};
  \filldraw[black] (0,2) circle (2pt) node[anchor=east] {$0$};
  \filldraw[black] (0,3) circle (2pt) node[anchor=east] {$0$};
  \filldraw[black] (0,4) circle (2pt) node[anchor=east] {$0$};
  \filldraw[black] (0,5) circle (2pt) node[anchor=east] {$-1$};
  \filldraw[black] (1,0) circle (2pt) node[anchor=north] {$0$};
  \filldraw[black] (2,0) circle (2pt) node[anchor=north] {$0$};
  \filldraw[black] (3,0) circle (2pt) node[anchor=north] {$0$};
  \filldraw[black] (4,0) circle (2pt) node[anchor=north] {$0$};
  \filldraw[black] (1,1) circle (2pt) node[anchor=west] {$1$};
  \filldraw[black] (1,2) circle (2pt) node[anchor=south] {$-1$};
  \filldraw[black] (2,2) circle (2pt) node[anchor=west] {$1$};
  \filldraw[black] (2,3) circle (2pt) node[anchor=south] {$-1$};
  \filldraw[black] (3,3) circle (2pt) node[anchor=west] {$1$};
  \filldraw[black] (3,4) circle (2pt) node[anchor=south] {$-1$};
  \filldraw[black] (4,4) circle (2pt) node[anchor=west] {$1$};
  \filldraw[black] (4,5) circle (2pt) node[anchor=south] {$-1$};
  \draw[thick] (1,1) -- (1,2) -- (2,2) -- (2,3) -- (3,3) -- (3,4) -- (4,4) -- (4,5);
\end{tikzpicture}
\caption{Projections of a Zigzag\\
with Constants $1,-1,\dots$}
\label{fig_proj_zigzag}
\end{figure}
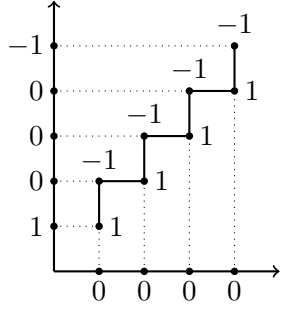
For other zigzags, the details vary,
but half of
the sum of the $x$
and $y$ projections is never more than 2.
So we have
\[
    C \sum_{(a,b) \in \Lambda} |c_{a,b}|^2
  \le
  \Bigl\| \sum_{(a,b) \in \Lambda} c_{a,b}
  e_{a,b} \Bigr\|^2
  \le 2.
\]
 We know $C > 0$, and
    $\sum_{(a,b) \in \Lambda} |c_{a,b}|^2$ is just
    the sum of the squares of all of our constants $1, -1, 1, -1, \dots$.
    For a zigzag of length $N$, this sum is $N+1$. Therefore, we have $(N+1)C\le 2$ for all $N\in{\mathbb N}$, which is a contradiction.
Therefore, the length of the zigzag
cannot be arbitrarily large.
\end{proof}

\begin{remark} \label{remark} {\rm We note that the frame constructed by Lev \cite[Theorem 1.1]{Lev}, regardless of the spectra chosen in the component space, does contain an infinitely long zigzag and is therefore not a Riesz basis. Indeed, in his construction, for each point $a\in \Lambda_x$, at least $q$ ($q\ge2$) points are constructed on the vertical line $x=a$. The same for each point in $\Lambda_y$. Hence, there is an infinite zigzag in the $\Lambda$ constructed.  }
\end{remark}
%We have shown that a Riesz basis cannot contain arbitrarily
%long zigzags.  
%But can a frame contain arbitrarily long zigzags?
%Yes.
%In fact, the frame that we
%constructed for all additive spaces
%contains an infinite zigzag (start at the origin and move right, then up).

\subsection{Proof of Theorem \ref{projections_of_Riesz_p}}

\begin{theorem_projections_of_Riesz_p}
Suppose $E(\Lambda)$ is a Riesz basis for an additive
  space $L^2(\rho)$. Then at least one of $E(\Lambda_x)$ and $E(\Lambda_y)$ is not a Riesz basis for its component space.
 \end{theorem_projections_of_Riesz_p}

\begin{proof}

We argue by contradiction.
  Suppose that $E(\Lambda)$ is a Riesz basis, and 
  both $E(\Lambda_x)$ and $E(\Lambda_y)$ are
  Riesz bases of the component spaces.    
   We will show that $\Lambda$ has a
  zigzag chain of length $L$, where $L$ is any positive integer.
  This contradicts Proposition
  \ref{zig_zag_chain}, which says that a Riesz basis has
  uniformly bounded zigzags, and so 
  the projections cannot both be Riesz bases.
  
  Our proof is presented as a series of lemmas, stated and proved
  inside the current proof of the theorem.  A white box ($\square$)
  marks the end of a lemma proof. 
  
  \vspace{1em}

 \begin{lemma}
 \label{zigzag_complete}
    No finite subset of $\Lambda$ is zigzag complete.
  \end{lemma}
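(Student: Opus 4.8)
The plan is to argue by contradiction and to turn the heuristic ``two-way summation'' argument from the start of this section into a rigorous one by confining it to a finite set, where interchanging the order of summation is unproblematic. So I would suppose that some \emph{nonempty} finite $S\subset\Lambda$ is zigzag complete (the empty set is vacuously complete but irrelevant to the zigzag construction that follows). Fix a point $(u,v)\in S$, and consider the test function $G\in L^2(\rho)$ specified by its projections $G_x=0$ and $G_y=e_v$; this is the exact analogue of the function used in the heuristic, and it makes sense because $v\in\Lambda_y$.

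Next I would expand $G$ in the Riesz basis, $G=\sum_{(a,b)\in\Lambda}c_{a,b}\,e_{a,b}$ with unique (unconditionally convergent) coefficients, and isolate the finite subsum $G_S=\sum_{(a,b)\in S}c_{a,b}\,e_{a,b}$. The heart of the matter is to compute the two projections of $G_S$. Projecting the whole expansion and regrouping by columns gives $G_x=\sum_{a\in\Lambda_x}\big(\sum_{(a,b)\in\pi_x^{-1}(a)}c_{a,b}\big)e_a$; since $E(\Lambda_x)$ is assumed to be a Riesz basis, the only expansion of $G_x=0$ is the trivial one, so every column sum of $G$ vanishes. Likewise, $E(\Lambda_y)$ being a Riesz basis forces the row sum over each $b\in\Lambda_y$ to equal $\delta_{b,v}$, since $G_y=e_v$. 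Now zigzag completeness does the localization: for $a\in S_x$ the entire column $\pi_x^{-1}(a)$ lies in $S$, so the column sum of $G_S$ coincides with the (vanishing) column sum of $G$, and similarly every row sum of $G_S$ coincides with that of $G$. Hence $(G_S)_x=0$ and $(G_S)_y=e_v$ (the single surviving term, as $v\in S_y$).

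Finally I would evaluate $\sum_{(a,b)\in S}c_{a,b}$, now a genuinely finite sum, in two ways. Grouping by columns yields $\sum_{a\in S_x}(\text{column sum})=0$, whereas grouping by rows yields $\sum_{b\in S_y}\delta_{b,v}=1$. This gives $0=1$, the desired contradiction, and shows no nonempty finite subset of $\Lambda$ can be zigzag complete.

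The main obstacle, and essentially the only point requiring care, is the passage from the Riesz expansion of $G$ to the column-sum and row-sum expansions of its projections $G_x$ and $G_y$: one must know that projecting onto an axis commutes with the (merely conditionally convergent) basis series and that regrouping its terms into columns and rows is legitimate. This is exactly what bounded multiplicity buys us (Theorem \ref{bounded_multiplicity}): every column and row of $\Lambda$ is finite with uniformly bounded cardinality, so each grouped coefficient is a finite sum and the sequence of column (resp. row) sums is again square-summable; by uniqueness of Riesz-basis coefficients these must be the coefficients of $G_x$ (resp. $G_y$) in $E(\Lambda_x)$ (resp. $E(\Lambda_y)$). Once this identification is secured, the zigzag-completeness localization and the finite double count are routine.
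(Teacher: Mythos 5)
Your proof is correct and follows essentially the same route as the paper's: the same test function with $x$-projection $0$ and $y$-projection $e_v$, the same identification of column sums as $0$ and row sums as $\delta_{b,v}$ via uniqueness of the Riesz-basis expansions of the projections, the same use of zigzag completeness to confine whole rows and columns to the finite set, and the same finite double count yielding $0=1$. Your extra remarks on why the regrouping into row and column sums is legitimate (bounded multiplicity plus uniqueness of coefficients) make explicit a point the paper handles with the same appeal to Theorem \ref{bounded_multiplicity}.
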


  \begin{lemmaproof}
  Let $F \subset \Lambda$ be a finite set that is zigzag complete.
  We derive a contradiction. Let $(u,v)$ be any point in $F$.
  Define a function $f$ in $L^2(\rho)$ so that
  \[
    f_x = 0 \qquad \text{and} \qquad
    f_y = e_v.
  \]
 Now, since $E(\Lambda)$ is a basis for $L^2(\rho)$, there is
  a unique set of coefficients $C = \{c_{a,b}\}_{(a,b) \in \Lambda}$
  specifying a linear combination
  of functions in $E(\Lambda)$ such that 
  \[
    f = \sum_{(a,b) \in \Lambda} c_{a,b} \, e_{a,b}.
  \]
  Since by hypothesis $E(\Lambda_x)$ is also a basis,
  the $x$ projection of $f$ is a unique linear combination
  \[
    f_x = \sum_{a \in \Lambda_x} 
    \Bigl( \sum_{(a,b) \in \pi_x^{-1}(a)} c_{a,b}\Bigr) \, e_a.
  \]
  However, by construction $f_x$ is the zero function,
  and so each coefficient in the above expansion is zero:
  \[
    \forall a \in \Lambda_x, \;
    \sum_{(a,b) \in \pi_x^{-1}(a)} c_{a,b} = 0.
  \]
  Thinking of the coefficients arranged in a two-dimensional
  plane, this tells us that every column sum of the set $C$
  is zero.  Note that these sums are finite and each has only finitely many terms, since $\Lambda$
  has bounded multiplicity (Theorem 
  \ref{bounded_multiplicity}).

  Similarly, $E(\Lambda_y)$ is also a basis, and so
  we have the unique linear combination
  \[
    f_y = \sum_{b \in \Lambda_y} 
    \Bigl( \sum_{(a,b) \in \pi_y^{-1}(b)} c_{a,b}\Bigr) \, e_b.
  \]
  But by construction 
\[
  f_y = e_v,
\]
and this equation is itself a linear 
  combination of elements in $E(\Lambda_y)$.
  Hence we have
  \[
    \sum_{(a,v) \in \pi_y^{-1}(v)} c_{a,v} = 1
  \]
  and for all $w \ne v$,
  \[
    \sum_{(a,w) \in \pi_y^{-1}(w)} c_{a,w} = 0.
  \]
  That is, every row sum of $C$ is zero,
  except for the row containing $c_{u,v}$,
  where the row sum is 1.
  
  Now consider the subset of $C$
  corresponding to points of the finite set $F$.
  Let
  \[
    C' = \{ c_{a,b} : (a,b) \in F \}.
  \]
  Since $F$ is zigzag complete, every column of $C$
  is either entirely within $C'$, or entirely outside $C'$.
  Similarly, every row of $C$ is either entirely
  within $C'$, or entirely outside $C'$.  In particular,
  the row whose sum is 1 is in $C'$, since $(u,v) \in F$.
  Thus $C'$ is a finite set whose sum is $0$ when added by
  columns and $1$ when added by rows.
  This is impossible.
  We conclude that
  $F$ is not zigzag complete, and thus no finite subset
  of $\Lambda$ is zigzag complete.
\end{lemmaproof}

\vspace{1em}
%\begin{theorem}
%  Suppose that $E(\Lambda)$ is a Riesz basis in an additive
%  space.  Then $E(\Lambda_x)$ and
%  $E(\Lambda_y)$ are not both Riesz bases for their
%  component spaces.
%\end{theorem}

  %Our next lemma is a direct consequence of the
 % fact that no finite subset of $\Lambda$ is zigzag complete.

\begin{lemma}\label{lemma_S}
  There exists a sequence of distinct points 
  \[
  S = \{ s_n \}_{n = 1}^\infty, \; S \subset \Lambda,
  \]
  such that for every $s_k \in S$ with $k>1$, there exists an $s_j \in S$
  with $j < k$, such that either $s_j$ and $s_k$ are on the same
  horizontal line, or $s_j$ and $s_k$ are on the same vertical line.
\end{lemma}

  \begin{lemmaproof}
Choose an arbitrary point in $\Lambda$ to be $s_1$. We construct the sequence of points inductively. Suppose that we have picked $s_1,...,s_N$ such that the stated property holds for all $1<k\le N$. Then  the set $S_{N}=\{s_1,...,s_N\}$ is not zigzag complete by Proposition \ref{zigzag_complete}. There exists $s_j$ $(j\le N)$ such that we can find some $s_{N+1}\in \Lambda\setminus S_N$ such that $s_j,s_{N+1}$ lie either on the same horizontal line or the same vertical line. Hence,  such a sequence exists.
\end{lemmaproof}
\vspace{1em}

For the rest of the proof, we assume $S$ is a sequence
as described in the lemma above.
Also, we need a few definitions.
For any
point $s_k \in S$, let
\[
  V_k = \{ s_i \in S : 
  s_i \text{ and } s_k \text{ are on the same vertical line }
\}
\]
and
\[
  H_k = \{ s_i \in S : 
  s_i \text{ and } s_k \text{ are on the same horizontal line }
\}.
\]
We also need to designate the point with smallest index in 
$V_k$ or $H_k$.  We call these $s_{v(k)}$ and $s_{h(k)}$,
respectively.
That is, for any $k$ we have
$s_{v(k)} \in V_k$, and for any $s_i \in V_k$ we have
\begin{equation} \label{eqv_k}
v(k) \le i,
\end{equation}
and similarly for $s_{h(k)}$ and $H_k$.  In particular, $v(k) \le k$ and $h(k) \le k$.

  \vspace{1em}
\begin{lemma}
  Let $f: S \to \Z$ be the function defined recursively as follows:
  \[
    f(s_n) = \begin{cases}
      0 & n = 1,\\
      f(s_{v(n)}) + 1 & v(n) < n,\\
      f(s_{h(n)}) + 1 & h(n) < n.
    \end{cases}
  \]
  Then $f$ is well defined, and for any $s_k \in S$,
  $f(s_k)$ is the length of a zigzag in $S$
  (hence in $\Lambda$)
  from $s_1$ to $s_k$.
\end{lemma}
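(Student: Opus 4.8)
The plan is to prove both claims by strong induction on $n$, carrying along a slightly strengthened hypothesis that also records the direction of the final step of the zigzag realizing $f(s_k)$. Concretely, I would show by induction that for each $k$ exactly one of the three cases in the definition applies, that $f(s_k)$ is the length of a zigzag $Z_k \subset S$ from $s_1$ to $s_k$, and that $Z_k$ ends with a \emph{vertical} step when the value was produced by the $v$-case and with a \emph{horizontal} step when it was produced by the $h$-case (with $Z_1$ the trivial one-point zigzag). The base case $n=1$ is immediate.

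The crucial structural observation, which I would isolate first, is that $v(v(n)) = v(n)$ whenever $v(n) < n$, and symmetrically $h(h(n)) = h(n)$ whenever $h(n) < n$. Indeed, since $s_{v(n)}$ lies on the same vertical line as $s_n$, the sets $V_{v(n)}$ and $V_n$ coincide, so by \eqref{eqv_k} their minimal indices agree. This is exactly what resolves the alternation constraint in the zigzag definition: to append the vertical step $s_{v(n)} \to s_n$ to a zigzag ending at $s_{v(n)}$, that zigzag must not itself end with a vertical step. But $v(v(n)) = v(n)$ means the $v$-case does \emph{not} apply at $s_{v(n)}$, so by the strengthened inductive hypothesis $Z_{v(n)}$ is either trivial or ends with a horizontal step; in either case the append is legal and yields a zigzag of length $f(s_{v(n)}) + 1$ ending vertically. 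The horizontal case is symmetric.

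For each $n > 1$, the defining property of $S$ (Lemma \ref{lemma_S}) guarantees that at least one of $v(n) < n$, $h(n) < n$ holds, so at least one case produces both a value and a zigzag. Well-definedness then amounts to ruling out that \emph{both} hold. If both held, the previous paragraph would supply one zigzag from $s_1$ to $s_n$ ending with a vertical step and another ending with a horizontal step; these are distinct zigzags between the same two points, which by the earlier fact that two distinct zigzags between a fixed pair of points force a zigzag loop would produce a zigzag loop in $\Lambda$. This contradicts Proposition \ref{no_loops}. Hence exactly one case applies, $f(s_n)$ is unambiguous, and the associated zigzag has length $f(s_n)$ and the asserted terminal direction, closing the induction.

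The step I expect to be the main obstacle is precisely the bookkeeping around alternation: a naive attempt to append a step to the inductively given zigzag can fail because two consecutive steps in the same direction are forbidden. The identity $v(v(n)) = v(n)$ is the key that makes every append legal, and combining it with the no-loop property, rather than attempting a direct comparison of the two recursive values, is what makes well-definedness fall out cleanly. One should also dispatch the harmless point that $s_n$ is distinct from all earlier points of $Z_{v(n)}$, which is immediate since the $s_i$ are distinct and $s_n$ carries the largest index, so no accidental repetition spoils the appended sequence.
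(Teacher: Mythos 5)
Your proof is correct and takes essentially the same route as the paper's: induction on the index of $s_n$, well-definedness obtained by ruling out the case where both $v(n)<n$ and $h(n)<n$ via the no-zigzag-loop property (Proposition \ref{no_loops}), and realizing $f(s_n)$ by appending one zig or zag to the inductively constructed zigzag. The only difference is bookkeeping for the alternation constraint: you carry the terminal direction of each zigzag in the induction hypothesis and justify the append with the identity $v(v(n))=v(n)$, whereas the paper carries the hypothesis that zigzags run from lower to higher index and invokes the minimality of $v(n)$ in $V_n$ --- these encode the same fact.
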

Since there is no zigzag loop in $\Lambda$, the zigzag from $s_1$ to $s_k$ is unique and therefore $f(s_k)$ is the zigzag distance from $s_1$ to $s_k$.

\begin{lemmaproof}
We argue by induction
on $S$.  

Consider $s_1$ first.  Since 
in the definition of $f$ we have $n = 1$ and
$v(1) = h(1) = 1$, only
the first clause in the definition will apply, and
so $f(s_1) = 0$.
Thus, $f$ is well-defined on $s_1$, and $f(s_1)$ is the
length of a zigzag from $s_1$ to $s_1$.

Now consider $s_n$, for $n > 1$.  Assume that for 
all $k < n$, $f(s_k)$ is well-defined 
and is the length of a zigzag in $S$ from $s_1$ to $s_k$.
We further assume that the zigzag goes from points
of lower index in $S$ to points of higher index.
(Note that this is true for the base case above,
since that zigzag has length 0.)

We show that $f(s_n)$ is also well-defined, and is the
length of a zigzag from $s_1$ to $s_n$
that goes from points of lower index to points of
higher index.

First we show that $f(s_n)$ is well-defined.  The first condition 
in the function definition does not apply since $n > 1$. Suppose $f(s_n)$ is not well-defined. Then we have two cases:
\begin{enumerate}[(1)]
\item $h(n) \ge  n$ and $v(n) \ge n$;
\item $v(n) < n$ and $h(n) < n$.
\end{enumerate}

Case (1) is not possible because then we would have $h(n) = v(n)=n$ (by (\ref{eqv_k})).  This cannot be
true except for $n=1$, since the definition of $S$ in Lemma \ref{lemma_S} says that
every point after $s_1$ 
must be on the same horizontal or vertical line as some
predecessor in $S$. 

Case (2) is not possible either. First notice that in this case $h(n) \ne v(n)$, since $s_n$ is the only intersection point between the horizontal and vertical lines passing through itself.
Therefore $s_{h(n)} \ne s_{v(n)}$. By the induction hypothesis there is
a zigzag from $s_1$ to $s_{v(n)}$ and another from
$s_1$ to $s_{h(n)}$.
This situation cannot arise,
because we could then construct two different zigzags from $s_1$
to $s_n$, one ending with a horizontal zig from $s_{h(n)}$
and one ending with a vertical zag from $s_{v(n)}$.
We would have a zigzag loop, which is
not possible in a Riesz basis (Proposition \ref{no_loops}). 

Hence either $h(n)<n$ or $v(n)<n$ holds, but not both.  We conclude that $f$ is well-defined by the definition
above.  

Now we show that $f(s_n)$ is the length of a zigzag
from $s_1$ to $s_n$.  We consider the case where $v(n) < n$;
the other case ($h(n) < n$) is similar.
Since $v(n) < n$, there is a zigzag from $s_1$ to $s_{v(n)}$,
by our induction hypothesis.
The last component of this zigzag is not a vertical zag,
because the zigs and zags go from points of lower index
in $S$ to points of higher index, and by definition $s_{v(n)}$ is
the point of lowest index in $V_n$.  Thus, the zigzag from
$s_1$ to $s_{v(n)}$ is either a zigzag of zero length 
(if $v(n) = 1$), or it is a zigzag that ends with a horizontal
zig.  In either case, by adding a vertical zag from 
$s_{v(n)}$ to $s_n$, we make a zigzag from $s_1$ to
$s_n$.
Our last zag goes from a
point of lower index to one of higher index in $S$,
and the length of the entire zigzag is $f(s_{v(n)}) + 1$,
which matches our definition of $f$.
Therefore $f(s_n)$ is the length of a zigzag from $s_1$ to $s_n$.
\end{lemmaproof}
\vspace{1em}

Next we show that for any $N \in \Z$, the number of points
$s_k \in S$ with $f(s_k) = N$ is finite.

  \vspace{1em}
\begin{lemma}
  For any $N \in \Z$, $f^{-1}(N)$ is a finite set.
  That is, $f$ takes on any value $N$ only a finite number
  of times.
\end{lemma}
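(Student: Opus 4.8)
The plan is to induct on $N$, using the bounded multiplicity of $\Lambda$ (Theorem \ref{bounded_multiplicity}) to control how $f^{-1}(N+1)$ grows out of $f^{-1}(N)$. Since $f$ takes only nonnegative values, $f^{-1}(N) = \emptyset$ whenever $N < 0$, so it suffices to treat $N \ge 0$. Let $M$ denote the multiplicity of $\Lambda$, so that every vertical or horizontal line meets $\Lambda$ (and hence $S$) in at most $M$ points.

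For the base case $N = 0$, I would observe that $f(s_k) = 0$ forces the unique zigzag from $s_1$ to $s_k$ to have length $0$, i.e. $s_k = s_1$; since the $s_n$ are distinct, $f^{-1}(0) = \{ s_1 \}$, a single point.

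For the inductive step, assume $f^{-1}(N)$ is finite. The key observation is that the recursive definition of $f$ attaches to each $s_k$ with $k > 1$ a unique \emph{parent} --- either $s_{v(k)}$ or $s_{h(k)}$ --- satisfying $f(s_k) = f(\text{parent}) + 1$, where the parent lies on the same vertical or horizontal line as $s_k$. Consequently, if $f(s_k) = N+1$, then its parent lies in $f^{-1}(N)$ and shares a line with $s_k$. Hence every point of $f^{-1}(N+1)$ lies on one of the (at most $2\,|f^{-1}(N)|$) horizontal and vertical lines passing through the points of $f^{-1}(N)$. Since each such line carries at most $M$ points of $S$ by bounded multiplicity, I obtain $|f^{-1}(N+1)| \le 2 M\, |f^{-1}(N)|$, which is finite; this closes the induction.

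The main obstacle --- and the place where the earlier lemmas do the real work --- is the well-definedness of the parent map: one needs to know that exactly one of $v(k) < k$ and $h(k) < k$ holds, so that each point has a single parent, and that this parent genuinely has $f$-value one less. Both facts rest on the absence of zigzag loops (Proposition \ref{no_loops}), which yields uniqueness of the zigzag distance, together with the defining property of $S$ from Lemma \ref{lemma_S}. Once the parent map is in hand, the counting step is immediate from bounded multiplicity, so the estimate $|f^{-1}(N+1)| \le 2M\,|f^{-1}(N)|$ requires no further calculation.
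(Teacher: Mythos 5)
Your proposal is correct and follows essentially the same route as the paper: induction on $N$, with the base case $f^{-1}(0)=\{s_1\}$, and the inductive step bounding $f^{-1}(N+1)$ by noting that each such point is attached via the recursion to a point of $f^{-1}(N)$ on a common vertical or horizontal line, of which there are only finitely many by bounded multiplicity (Theorem \ref{bounded_multiplicity}). Your explicit bound $|f^{-1}(N+1)|\le 2M\,|f^{-1}(N)|$ is just a quantified version of the paper's counting argument.
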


  \begin{lemmaproof}
Our proof is by induction on $N$.  For the base
case of $N = 0$, we have $f^{-1}(0) = \{ s_1 \}$, which
is a finite set.  

Now assume that $f^{-1}(k)$ is finite for each $k < N$.
We show $f^{-1}(N)$ is also finite.
To do this, we consider all of the ways that a point
$s_n$ can be assigned $f(s_n) = N$.  
Looking at the definition of $f$, we see that this can
occur for any point $s_n$ where $v(n) < n$,
as long as $s_{v(n)}$ is assigned $N-1$.
But the number of points where $f$ is $N-1$ is finite,
by our induction hypothesis.
Also, $\Lambda$ has bounded multiplicity (Theorem
\ref{bounded_multiplicity}),
and so the number of points in each
$V_k$ is finite.  Hence the total number of points $s_n$
assigned $f(s_n) = N$ in the case $v(n) < n$ is finite.
A similar argument applies to points satisfying
$h(n) < n$.
Therefore $f^{-1}(N)$ is finite.
\end{lemmaproof}
\vspace{1em}

Let us summarize our progress so far.
We have a sequence $S \subset \Lambda$, 
where each point in $S$ other than the first
lies on the same horizontal line or the same vertical line
as some predecessor.  We are able to assign
to each point $s_n$ of $S$ a non-negative integer $f(s_n)$ giving the
zigzag distance in $S$ from $s_1$ to $s_n$.  
Also, we showed that for any possible distance $N$, there are
only a finite number of points in $S$ at distance $N$ from $s_1$.

  \vspace{1em}
\begin{lemma}
  For any positive integer $L$,
  there is a point $s_n \in S$ such that $f(s_n) \ge L$.
\end{lemma}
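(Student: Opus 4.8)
The plan is to deduce the unboundedness of $f$ from a simple counting argument: $S$ is an infinite set, while each of its level sets under $f$ is finite. First I would argue by contradiction, assuming that $f(s_n) < L$ for every $n$. Under this assumption every point of $S$ lies in one of the finitely many level sets $f^{-1}(0), f^{-1}(1), \dots, f^{-1}(L-1)$, so that
\[
  S = \bigcup_{N=0}^{L-1} f^{-1}(N).
\]
By the preceding lemma each $f^{-1}(N)$ is finite, and a finite union of finite sets is finite, so $S$ would itself be finite. But $S = \{ s_n \}_{n=1}^\infty$ is an infinite set of distinct points by Lemma \ref{lemma_S}, a contradiction. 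Hence there must exist some $s_n \in S$ with $f(s_n) \ge L$.

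With this lemma in hand, I would close the proof of the theorem as follows. We have already shown that $f(s_n)$ is the length of a zigzag in $S$ (hence in $\Lambda$) running from $s_1$ to $s_n$, so the existence of a point with $f(s_n) \ge L$ produces a zigzag of length at least $L$. Since $L$ was an arbitrary positive integer, $\Lambda$ contains zigzags of arbitrarily large length. This directly contradicts Proposition \ref{zig_zag_chain}, which asserts that the zigzags in a Riesz basis are uniformly bounded in length. Therefore the assumption that both $E(\Lambda_x)$ and $E(\Lambda_y)$ are Riesz bases for their component spaces is untenable, and at least one of them must fail to be a Riesz basis.

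I do not expect any genuine obstacle in this final lemma; it is an immediate pigeonhole consequence of the two facts already established, namely that $S$ is infinite and that $f$ has finite fibers. The substantive work was carried out earlier, in verifying that $f$ is well defined (so that it partitions $S$ into honest level sets) and in bounding the cardinality of each fiber via the bounded-multiplicity property of $\Lambda$ (Theorem \ref{bounded_multiplicity}). The only point requiring a little care is to invoke the infinitude of $S$ correctly: Lemma \ref{lemma_S} guarantees that the $s_n$ are pairwise distinct, which is precisely what prevents the finite union of finite fibers from exhausting $S$.
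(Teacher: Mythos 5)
Your argument is correct and is essentially identical to the paper's own proof: both observe that the level sets $f^{-1}(0),\dots,f^{-1}(L-1)$ are finite by the preceding lemma, so their union cannot exhaust the infinite sequence $S$, forcing some $s_n$ with $f(s_n)\ge L$. You simply phrase the pigeonhole step as an explicit contradiction, which changes nothing of substance.
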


  \begin{lemmaproof}
Since the number of points of $S$ at each possible distance 
from $s_1$ is finite,
the total number of points at distance less than $L$ is also
finite.  Therefore, there is some $s_n$ 
such that $f(s_n) \ge L$.
\end{lemmaproof}
\vspace{1em}

And now we conclude the proof of our theorem.
  We have shown that $\Lambda$ has a zigzag of length $L$, where
  $L$ is arbitrary.  This violates Proposition
  \ref{zig_zag_chain}, saying that
  Riesz bases have uniformly bounded zigzags.
  Therefore, it is not possible for
  both $\Lambda_x$ and $\Lambda_y$ to be sets of exponents
  of Riesz bases. The proof is now complete. 
  \end{proof}

\vspace{1em}

\section{Finding orthonormal bases}

In this section, we will investigate which additive spaces of Lebesgue measure on intervals will admit an exponential orthonormal basis. We first note that the orthonormality gives rise to an important orthogonality equation.

\vspace{1em}

\begin{lemma}
Suppose $E(\Lambda)$ is an orthonormal basis for the additive space whose components
are $[t, t+1]$ on the $x$ axis and $[t', t'+1]$ on the $y$ axis.  Let $(a,b)$ and $(c,d)$ be any
two distinct points of $\Lambda$.  Let $(\lambda_1, \lambda_2) = (a,b) - (c,d)$.
Then $\lambda_1$ and $\lambda_2$ satisfy $\lambda_1\lambda_2\neq 0$ and
\begin{equation}
  \label{orthogonality_equation}
  e^{\pi i \lambda_1 (2t+1)} \; \frac {\sin(\pi \lambda_1)}
  {\pi \lambda_1} =
  - e^{\pi i \lambda_2 (2t'+1)} \; \frac {\sin(\pi \lambda_2)}
  {\pi \lambda_2}.
\end{equation}
\end{lemma}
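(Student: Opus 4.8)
The plan is to reduce the orthogonality condition to the explicit evaluation of two one-dimensional Fourier integrals over the intervals. Since $(a,b)$ and $(c,d)$ are distinct points of an orthonormal basis, we have $\ang{e_{a,b}, e_{c,d}} = 0$. Invoking the inner-product decomposition recorded in the preliminaries,
\[
  \ang{e_{a,b}, e_{c,d}}_{L^2(\rho)} = \tfrac12 \ang{e_a, e_c}_{L^2(\mu)} + \tfrac12 \ang{e_b, e_d}_{L^2(\nu)},
\]
where $\mu$ and $\nu$ are the Lebesgue measures on $[t,t+1]$ and $[t',t'+1]$. Thus the identity \eqref{orthogonality_equation} will follow the moment I evaluate the two one-dimensional inner products and set their sum to zero.

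Next I would compute $\ang{e_a, e_c}_{L^2(\mu)} = \int_t^{t+1} e^{2\pi i \lambda_1 x}\, dx$, with $\lambda_1 = a - c$; this is simply the Fourier transform of the indicator of $[t,t+1]$ evaluated at $\lambda_1$. Taking the antiderivative at the endpoints and factoring out the phase coming from the midpoint of the interval, I obtain
\[
  \ang{e_a, e_c}_{L^2(\mu)} = e^{\pi i \lambda_1 (2t+1)}\,\frac{\sin(\pi \lambda_1)}{\pi \lambda_1}.
\]
The one algebraic step that needs care is the rewriting $e^{2\pi i \lambda_1} - 1 = 2i\, e^{\pi i \lambda_1} \sin(\pi \lambda_1)$, which produces both the midpoint phase $e^{\pi i \lambda_1 (2t+1)}$ and the sinc factor. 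The identical computation, with $t'$ in place of $t$ and $\lambda_2 = b - d$, gives the $y$-component. Substituting both into the vanishing inner product and clearing the factor $\tfrac12$ yields \eqref{orthogonality_equation} directly.

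It remains to justify $\lambda_1 \lambda_2 \neq 0$. For this I would cite Theorem \ref{multiplicity_one}, part 1, which establishes that an orthonormal basis for an additive space has multiplicity one: no two distinct points of $\Lambda$ share a horizontal or a vertical line. Since $(a,b) \neq (c,d)$, this forces $a \neq c$ and $b \neq d$, i.e. $\lambda_1 \neq 0$ and $\lambda_2 \neq 0$. I do not expect a genuine obstacle in this lemma, as it is essentially a direct computation; the only points demanding attention are the algebraic simplification that isolates the midpoint phase and the sinc, and the observation that $\lambda_1\lambda_2 \neq 0$ guarantees that neither denominator appearing in \eqref{orthogonality_equation} vanishes.
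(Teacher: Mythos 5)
Your proposal is correct and follows essentially the same route as the paper: expand the vanishing inner product into the two one-dimensional integrals, evaluate $\int_t^{t+1} e^{2\pi i \lambda x}\,dx = e^{\pi i \lambda(2t+1)}\frac{\sin(\pi\lambda)}{\pi\lambda}$, and invoke Theorem \ref{multiplicity_one} for $\lambda_1\lambda_2\neq 0$. The paper's proof is exactly this computation, so there is nothing to add.
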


\begin{proof}
Since
  \[
    \ang{e_{a,b}, e_{c,d}} = 0,
  \]
we must have
  \[
    \int e^{2 \pi i \lambda_1 x} \, d \mu +
    \int e^{2 \pi i \lambda_2 y} \, d \nu = 0.
\]
Since $\Lambda$ has multiplicity one (Theorem \ref{multiplicity_one}),
neither $\lambda_1$ nor $\lambda_2$ is zero.
Because
\begin{align*}
  \int_t^{t+1} e^{2 \pi i \lambda y} \, d y
  &= e^{\pi i \lambda (2t+1)} \; \frac {\sin(\pi \lambda) }
  {\pi \lambda},
\end{align*}
we have
\[
  e^{\pi i \lambda_1 (2t+1)} \; \frac {\sin(\pi \lambda_1)}
  {\pi \lambda_1} =
  - e^{\pi i \lambda_2 (2t'+1)} \; \frac {\sin(\pi \lambda_2)}
  {\pi \lambda_2}.
  \]
\end{proof}

We call \eqref{orthogonality_equation} the \textit{Orthogonality Equation}.  
Note that the Orthogonality Equation is slightly different for each additive space
(i.e., for each choice of $t$ and $t'$).

\subsection{Proof of Theorem \ref{th_symmetric}, part 1}
\label{section_L}

Here we show that the L Space has a unique orthonormal basis.

\vspace{1em}

\begin{theorem_th_symmetric1}
Assuming $(0,0)\in\Lambda$, the only orthonormal basis for the L Space is $E(\Lambda)$, where
$$
\Lambda = \{ (n/2, -n/2) \mid n \in \Z \}.
$$
\end{theorem_th_symmetric1}

\begin{proof}
Let $E(\Lambda)$ be a prospective orthonormal basis for the L Space.
Our proof is in three steps:
\begin{enumerate}
\item Solve the Orthogonality Equation (Equation \eqref{orthogonality_equation}) for the L Space to find the possible values of $\lambda_1$ and $\lambda_2$,
the differences between points of $\Lambda$.
\item Use this solution to show that $\Lambda$ must have the form given in the theorem statement.
\item Show that $E(\Lambda)$ is in fact an orthonormal basis for the L Space.
\end{enumerate}

\textbf{Step 1}: We solve the Orthogonality Equation to find values for the differences in $\Lambda$.

The Orthogonality Equation for the L Space (where $t = t' = 0$ in terms of Equation \eqref{orthogonality_equation})
is
  \begin{equation}
    \label{OE_L}
e^{\pi i \lambda_1} \; \frac {\sin(\pi \lambda_1)} {\pi \lambda_1} = - e^{\pi i \lambda_2} \; \frac {\sin(\pi \lambda_2)} {\pi \lambda_2},\ \lambda_1\lambda_2\neq 0. 
  \end{equation}
We will show that this equation implies that either
\begin{enumerate}
\item $\lambda_1, \lambda_2 \in \Z$, or
\item $\lambda_1 = 1/2 + n$ for some $n \in \Z$, and $\lambda_2 = - \lambda_1$.
\end{enumerate}

  Looking at \eqref{OE_L}, we note that
$e^{\pi i \lambda_1}$ and $ - e^{\pi i \lambda_2}$ 
are points on the unit circle in $\C$, and 
${\sin(\pi \lambda_1)} / {\pi \lambda_1}$ and 
${\sin(\pi \lambda_2)} / {\pi \lambda_2}$ are real-valued multipliers.
Therefore, there are three ways that \eqref{OE_L} might be true.

\textit{Case 1}: The multipliers 
${\sin(\pi \lambda_1)} / {\pi \lambda_1}$ and ${\sin(\pi \lambda_2)}  / {\pi \lambda_2}$
are $0$.  Then $\sin(\pi \lambda_1) = \sin(\pi \lambda_2) = 0$,
and so $\lambda_1$ and $\lambda_2$ are nonzero integers.

\textit{Case 2}: The multipliers 
${\sin(\pi \lambda_1)} / {\pi \lambda_1}$ and ${\sin(\pi \lambda_2)}  / {\pi \lambda_2}$
are nonzero and equal.
In this case, from \eqref{OE_L} we have
$$
e^{\pi i \lambda_1} = - e^{\pi i \lambda_2}.
$$
Using $e^{i \theta} = \cos(\theta) + i \sin(\theta)$, 
\begin{align*}
  \cos \pi \lambda_1 &= - \cos \pi \lambda_2\\
  \sin \pi \lambda_1 &= - \sin \pi \lambda_2.
\end{align*}
Since the multipliers
${\sin(\pi \lambda_1)} / {\pi \lambda_1}$ and ${\sin(\pi \lambda_2)}  / {\pi \lambda_2}$
are equal, but
$\sin \pi \lambda_1 = - \sin \pi \lambda_2$,
we must have 
$\lambda_1 = - \lambda_2$.
Then
$$
\cos \pi \lambda_1 = - \cos \pi \lambda_1,
$$
or $\cos \pi \lambda_1 = 0$.
Thus $\lambda_1$ is $1/2 + n$, for some $n \in \Z$, and $\lambda_2 = - \lambda_1$.

\textit{Case 3}: The multipliers 
${\sin(\pi \lambda_1)} / {\pi \lambda_1}$ and ${\sin(\pi \lambda_2)}  / {\pi \lambda_2}$
are nonzero and of opposite sign.
Then, from \eqref{OE_L}
$$
e^{\pi i \lambda_1} =  e^{\pi i \lambda_2}.
$$
Using $e^{i \theta} = \cos(\theta) + i \sin(\theta)$ again,
\begin{align*}
\cos \pi \lambda_1 &=  \cos \pi \lambda_2\\
\sin \pi \lambda_1 &=  \sin \pi \lambda_2.
\end{align*}
Now ${\sin(\pi \lambda_1)} / {\pi \lambda_1} = - {\sin(\pi \lambda_2)}  
/ {\pi \lambda_2}$ (because these are the multipliers
of opposite sign),
but $
\sin \pi \lambda_1 =  \sin \pi \lambda_2$.
Then $\lambda_1 = - \lambda_2$, and so
$$
\sin \pi \lambda_1 = - \sin \pi \lambda_1,
$$
or $\sin \pi \lambda_1 = 0$.  This is impossible when the multipliers are nonzero.

We conclude that either $\lambda_1, \lambda_2 \in \Z$
or $(\lambda_1, \lambda_2) = (1/2 + n, - 1/2 - n)$ for some $n \in \Z$.

\textbf{Step 2}: Given these values for the differences between points of $\Lambda$,
we determine the elements of $\Lambda$.

We have shown that 
if $\lambda, \lambda' \in \Lambda$, then $\lambda - \lambda'$
is either a pair of integers, or $(1/2 + n, -1/2 - n)$ for some $n \in \Z$.
Note that in the latter case, we have $\lambda_1 = - \lambda_2$.  
Of course, we could have a mixture of these two
types of differences within $\Lambda$.

Define the relation
$\lambda \sim \lambda'$ to mean that $\lambda - \lambda'$ is a pair of integers.
Then $\sim$ is an equivalence relation on $\Lambda$,
dividing $\Lambda$ into two equivalence classes $P_1$ and $P_2$.
Assign the names $P_1$ and $P_2$
so that $(0,0) \in P_1$.
Then
$P_1$ is nonempty and contains the integer members of $\Lambda$.  
$P_2$ contains members whose components are odd multiples of $1/2$.
$P_2$ is also nonempty, since 
$\Lambda$ cannot consist only of integers (Theorem \ref{multiplicity_one}).

Choose $(a,b) \in P_1$, and $(u,v) \in P_2$.  
Since $(a,b) - (u,v)$ is not a pair of integers, we must have 
$a - u = -(b - v)$, or
$$
a + b = u + v.
$$
Since we can let $(u,v)$ range over $P_2$, and $(a,b)$ range over $P_1$, this 
means
that every pair in $\Lambda$ has the same sum.  In particular, since $(0,0) \in \Lambda$,
that sum is $0$.  Then $\Lambda$ contains some elements of the form $(n, -n)$, for $n \in \Z$, and some elements
of the form $(1/2 + n, -1/2 - n)$, for $n \in \Z$.  
We write this more compactly as
$$
\Lambda \subseteq \{ (n/2, -n/2) \mid n \in \Z \}.
$$
But in fact 
$\Lambda$ must be equal to this set, since the set on the right hand side is
orthogonal,
as demonstrated shortly.  
(If any point of this form were missing from $\Lambda$,
the corresponding function would be orthonormal to every member of $E(\Lambda)$.)
Therefore
$$
\Lambda = \{ (n/2, -n/2) \mid n \in \Z \}.
$$

\textbf{Step 3}: We show that $E(\Lambda)$ is in fact an orthonormal basis.

First we show that $E(\Lambda)$ is an orthonormal sequence.
Suppose we have two different pairs of $\Lambda$, which we
can call $(h_1, -h_1)$ and $(h_2, -h_2)$.
Here $h_1 = n_1/2$ and $h_2 = n_2/2$ for some $n_1, n_2 \in \Z$
with $n_1 \ne n_2$.
Then
\begin{align*}
  \ang{e_{h_1, -h_1},
  e_{h_2, -h_2}} &=
  \int_0^1 e^{2 \pi i \left( h_1 - h_2 \right) x} \,dx
  + \int_0^1 e^{2 \pi i \left( h_2 - h_1 \right) x} \,dx\\[1em]
  &= \frac{e^{\pi i (n_1 - n_2)} - e^{\pi i (n_2 - n_1)}}
  {\pi i (n_1 - n_2)}.
\end{align*}
Since $n_1$ and $n_2$ are integers, $n_1 - n_2$ and $n_2 - n_1$
are integers with opposite signs.  Therefore the above inner product is zero 
and so the set $E(\Lambda)$ is indeed orthonormal.

Now, we show that Parseval's Equality is true, namely
\[
  \|F\|^2 = \sum_{\lambda \in \Lambda} | \ang{ F, e_{\lambda}} |^2
\]
for any $F \in L^2(\rho)$.
Let $f(x) = F(x, 0)$ and
$g(y) = F(0, y)$.
Let $e_\lambda \in \Lambda$, so that $\lambda = (n/2, -n/2)$
for some $n \in \Z$.

We begin by looking at $\ang{ F, e_{\lambda}}$. By a change of variable, 
\begin{align*}
\ang{ F, e_{\lambda}} = \frac{1}{2}
\int_0^1 f(x) e^{- \pi i n x} \, dx 
+ \frac{1}{2}\int_{-1}^0 g(-x) e^{- \pi i n x} \, dx .
%&= \int_{-1}^0 g(-x) e^{- \pi i n x} \,dx
%+ \int_0^1 f(x) e^{- \pi i n x} \, dx
\end{align*}
Define a function $H: [-1,1] \to \C$ as
\[
  H(x) = \begin{cases}
    g(-x) & x \in [-1, 0]\\
    f(x) & x \in [0,1].
  \end{cases}
\]
Then
\[
  \ang{ F, e_{\lambda}} = \frac{1}{2}\int_{-1}^1 H(x) e^{-\pi i n x} \,dx.
\]

Since $\{ e^{2 \pi i n x} \}_{n \in \Z}$ is an orthonormal basis
for $L^2([0,1])$, we know that 
$\{\frac{1}{\sqrt{2}} e^{\pi i n x} \}_{n \in \Z}$ is an orthonormal basis
for $L^2([-1,1])$.  Thus we can apply Parseval's Equality to
$H$ itself, on the interval $[-1,1]$.  We have

\begingroup
%\allowdisplaybreaks
\begin{align*}
  \sum_{\lambda \in \Lambda} | \ang{ F, e_{\lambda}} |^2
  %&= \sum_{n \in \Z} \left| \int_{-1}^1 H(x) e^{- \pi i n x} \, dx \right|^2\\
  %&= \| H \|^2  & \text{(Parseval)}\\
  = &\frac{1}{2}\int_{-1}^1 | H(x) |^2 \, dx\\
  %&= \int_{-1}^0 |g(-x)|^2 \,dx + \int_0^1 |f(x)|^2 \,dx\\
  %&= - \int_1^0 |g(x)|^2 \, dx + \int_0^1 |f(x)|^2 \,dx 
  %& \text{(change of variable)}\\
  =& \frac{1}{2}\int_0^1 |f(x)|^2 \, dx + \frac{1}{2}\int_0^1 |g(x)|^2 \, dx\\
  = &\int_0^1 |F(x,y)|^2 \, d \rho(x,y) \\
%  &= \| F \|^2.
\end{align*}
\endgroup
We conclude that $E(\Lambda)$ is an orthonormal basis for the L Space. 
But we have also shown that
any orthonormal basis
must be of this form.  Therefore, $E(\Lambda)$ is the only
orthonormal basis for the L Space.
\end{proof}

Figure \ref{fig_L_basis} shows the basis and the space together.
\vspace{1em}

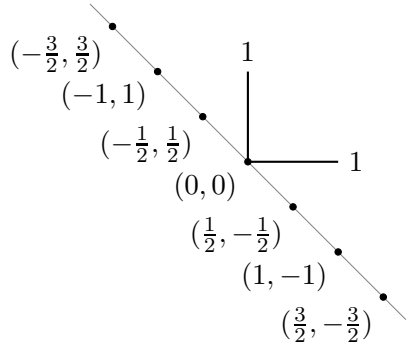
\begin{figure}[h]
  \centering
\begin{tikzpicture}[scale=.6]
  \draw[thick] (0,0) -- (0,2) node[anchor=south] {$1$};
  \draw[thick] (0,0) -- (2,0) node[anchor=west] {$1$};
  \draw[gray,very thin]  (-3.5,3.5) -- (3.5,-3.5);
  \filldraw[black] (-3,3) circle (2pt) node[anchor=north east] 
    {$(-\frac 3 2, \frac 3 2)$};
  \filldraw[black] (-2,2) circle (2pt) node[anchor=north east] {$(-1,1)$};
  \filldraw[black] (-1,1) circle (2pt) node[anchor=north east] 
    {$(-\frac 1 2, \frac 1 2)$};
  \filldraw[black] (0,0) circle (2pt) node[anchor=north east] {$(0,0)$};
  \filldraw[black] (1,-1) circle (2pt) node[anchor=north east] 
    {$(\frac 1 2, -\frac 1 2)$};
  \filldraw[black] (2,-2) circle (2pt) node[anchor=north east] {$(1, -1)$};
  \filldraw[black] (3,-3) circle (2pt) node[anchor=north east] 
    {$(\frac 3 2, -\frac 3 2)$};
\end{tikzpicture}
\caption{The L Space and its Orthonormal Basis}
\label{fig_L_basis}
\end{figure}

\FloatBarrier

\begin{remark}
{\rm From Step 3 in the proof, we can also obtain an orthonormal basis for the additive space where the components are $[k,k+1]$ and $k = 1,2,3...$. Indeed, the spectrum is }
$$
\Lambda = \{(\lambda,-\lambda): \lambda\in\Lambda_0\}
$$
{\rm where $\Lambda_0$ is a spectrum for $\Omega = [-(k+1),-k]\cup[k,k+1]$. Such a spectrum exists since $\Omega$ is a translational tile of two disjoint intervals,
and Fuglede's conjecture holds true for union of two intervals \cite{Laba2001}. We don't know however if $\Lambda$ is the unique spectrum up to translations. }
\end{remark}

\subsection{Proof of Theorem \ref{th_symmetric}, part 2}

Symmetric spaces are ones where the $x$ and $y$ components
are positioned at the same location.  We have already shown that at least one symmetric space,
the L Space,
has an orthonormal basis.  Here we find an
infinite class of symmetric spaces that have no orthonormal basis. 

%Let both components of the space be $[k,k+1]$.
%The class that we will study are the symmetric spaces with
%$2k+1 = 1/a$,
%for some integer $a > 1$.
%If 
%$2k+1 = 1/a$,
%then
%\[
%  k = \frac { 1 - a} {2a}.
%\]
%Thus the first few members of the class are
%\[
%  k = - \frac 1 4, \; k = - \frac 1 3, \; 
%  k = - \frac 3 8, \; k = - \frac 2 5, \; k = - \frac 5 {12},
%  \; \dots,
%\]
%and $\lim_{a \to \infty} k = -1/2$.  
%So in a sense we have a sequence of spaces
%that ``approach'' the Plus Space (where $k = -1/2$), 
%and which have no orthonormal basis.

\vspace{1em}
\begin{theorem_th_symmetric2}
Suppose a symmetric additive space has components $[t, t+1]$,
and $2t+1 = 1/a$ for some integer
$a > 1$.  Then there is no orthonormal basis for this space.
\end{theorem_th_symmetric2}

\begin{proof}
  In this case,
  the Orthogonality Equation 
  \eqref{orthogonality_equation}
  becomes
\[
  e^{\pi i \lambda_1 (1/a)} \; \frac {\sin(\pi \lambda_1)}
  {\pi \lambda_1} =
  - e^{\pi i \lambda_2 (1/a)} \; \frac {\sin(\pi \lambda_2)}
  {\pi \lambda_2},\ \lambda_1\lambda_2\neq 0.
\]

The proof is similar to our analysis of the L Space.
The Orthogonality Equation can be true in three cases.

\textit{Case 1}: The real multipliers 
  $\sin(\pi \lambda_1)/\pi \lambda_1$ 
  and
  $\sin(\pi \lambda_2)/\pi \lambda_2$ 
are zero.  In this case
$\lambda_1$ and $\lambda_2$ are nonzero integers.

\textit{Case 2}: The multipliers are nonzero and equal.
In this case
\[
  e^{\pi i \lambda_1 (1/a)}  =
  - e^{\pi i \lambda_2 (1/a)},
\]
and so 
\[
  \frac 1 a \pi \lambda_1 + \pi + 2n \pi = \frac 1 a \pi \lambda_2,
\]
for some $n \in \Z$.
That is,
\[
  \lambda_1 + a(2n + 1) = \lambda_2.
\]

If $a$ is even, then $a(2n+1)$ is an even integer, and so $\sin(\pi \lambda_1)
= \sin(\pi \lambda_2)$, implying $\lambda_1 = \lambda_2$
(since the multipliers are equal), which is impossible
since $a(2n+1) \ne 0$.

If $a$ is odd, then $a(2n+1)$ is an odd integer, and so
$\sin(\pi \lambda_1)
= - \sin(\pi \lambda_2)$, implying $\lambda_1 = - \lambda_2$.
Thus $\lambda_1 = (2n+1) a/2$, that is, $\lambda_1$ and $\lambda_2$ are
odd multiples of $a/2$, and of opposite sign.

\textit{Case 3}:
The multipliers are nonzero and opposite in sign.
Then
\[
  e^{\pi i \lambda_1 (1/a)}  =
  e^{\pi i \lambda_2 (1/a)},
\]
and so 
\[
  \lambda_1 + a(2n) = \lambda_2
\]
for some $n \in \Z$.
Since $a(2n)$ is an even integer, we have $\sin(\pi \lambda_1)
= \sin(\pi \lambda_2)$, implying $\lambda_1 = -\lambda_2$
(since the multipliers are of opposite sign).
Then
\[
  \lambda_1 + a(2n) = - \lambda_1,
\]
and so $\lambda_1$ and $\lambda_2$ are nonzero integers,
which is not possible if the multipliers are nonzero.

To summarize our findings: if $a$ is even, the Orthogonality Equation
has no solutions except $\lambda_1, \lambda_2$ integers, which is
impossible (Theorem \ref{multiplicity_one}).

If $a$ is odd, then we can have non-integers $\lambda_1$ and $\lambda_2$.
Step 2 in our L Space analysis (Section \ref{section_L})  applies here,
and says that in this case $\Lambda$ is
\[
\{ (an/2, -an/2) \mid n \in \Z \}.
\]
 However, the
$x$ (and $y$) projection is $\frac{a}{2}{\mathbb Z}$ and $a>2$. 
%If $a=2$, both projections are Riesz basis for $L^2[k,k+1]$ which means it cannot be an orthonormal basis for $L^2(\rho)$ by Theorem \ref{projections_of_Riesz_p}. When  $a>2$, 
By Landau's theorem, a frame spectrum for $L^2(\Omega)$ must have a Beurling density of at least the Lebesgue measure of $\Omega$. For $ \frac{a}{2}{\mathbb Z}$, the density $\frac{2}{a}<1$. This set cannot be a frame for $L^2([t,t+1])$ (\cite{Landau}).
But if $E(\Lambda)$ is an orthonormal basis,
then it is also a frame, and so 
its projections
must also be frames
(Theorem \ref{bounded_multiplicity}).
We have an orthonormal sequence
that is not a basis.
We conclude that none of these spaces has an orthonormal basis.
\end{proof}

\subsection{The T space has no orthonormal basis}

In this subsection, we also determine the spectrality of the T-space (c.f. Figure \ref{fig_T} in Section 2).

\vspace{1em}
\begin{theorem}
  There is no orthonormal basis for the T Space.
\end{theorem}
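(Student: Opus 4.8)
The plan is to feed the geometry of the T Space directly into the Orthogonality Equation \eqref{orthogonality_equation} and show that it forces every difference of spectral points to have integer coordinates, after which Theorem \ref{multiplicity_one} delivers the contradiction. The T Space has $x$ component $[-1/2,1/2]$, so $t=-1/2$ and $2t+1=0$, and $y$ component $[-1,0]$, so $t'=-1$ and $2t'+1=-1$. Substituting into \eqref{orthogonality_equation}, the phase factor $e^{\pi i \lambda_1(2t+1)}$ collapses to $1$, and for any difference $(\lambda_1,\lambda_2)=(a,b)-(c,d)$ of two distinct points of $\Lambda$ the equation becomes $\frac{\sin(\pi\lambda_1)}{\pi\lambda_1} = -\,e^{-\pi i \lambda_2}\,\frac{\sin(\pi\lambda_2)}{\pi\lambda_2}$, with $\lambda_1\lambda_2\neq 0$ guaranteed by the multiplicity-one property.

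The crucial observation is that the left-hand side is real. Writing $e^{-\pi i\lambda_2}=\cos(\pi\lambda_2)-i\sin(\pi\lambda_2)$ and extracting the imaginary part of the right-hand side, I would obtain the condition $\frac{\sin^2(\pi\lambda_2)}{\pi\lambda_2}=0$. Since $\lambda_2\neq 0$ by Theorem \ref{multiplicity_one}, this forces $\sin(\pi\lambda_2)=0$, i.e.\ $\lambda_2\in\Z$. But then the entire right-hand side vanishes, so $\frac{\sin(\pi\lambda_1)}{\pi\lambda_1}=0$ as well, giving $\sin(\pi\lambda_1)=0$ and hence $\lambda_1\in\Z$. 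Thus every difference of two distinct points of $\Lambda$ is a pair of nonzero integers. This is the whole content of the case analysis — note that, in contrast to the symmetric-space computation of Theorem \ref{th_symmetric}, the asymmetric phase here (one side has vanishing phase, the other does not) leaves no room for the half-integer solutions, so only the integer case survives.

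Finally, since we assume $(0,0)\in\Lambda$ throughout, subtracting the origin from an arbitrary point of $\Lambda$ shows that each point itself has integer coordinates, i.e.\ $\Lambda\subseteq\Z\times\Z$. Both components of the T Space are Lebesgue measures on intervals of length one, so Theorem \ref{multiplicity_one}, part 2, applies and asserts that an orthonormal basis cannot satisfy $\Lambda\subseteq\Z\times\Z$. This contradiction completes the proof. I do not anticipate a genuine obstacle here: the argument hinges entirely on the single algebraic feature $2t+1=0$ of the symmetric $x$-interval, which kills the $x$-side phase and makes the reality/imaginary-part bookkeeping force integrality, while the substantive work of ruling out $\Lambda\subseteq\Z\times\Z$ has already been carried out in Theorem \ref{multiplicity_one}.
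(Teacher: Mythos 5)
Your proposal is correct and follows essentially the same route as the paper: substitute $t=-1/2$, $t'=-1$ into the Orthogonality Equation, use the reality of the left-hand side to force $\sin(\pi\lambda_2)=0$ and hence $\lambda_1,\lambda_2\in\Z$, and then conclude $\Lambda\subseteq\Z\times\Z$, which contradicts Theorem \ref{multiplicity_one}. Your imaginary-part extraction is in fact a slightly more careful justification of the step the paper phrases as ``$e^{-\pi i\lambda_2}=\pm1$,'' but the argument is the same.
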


\begin{proof}
  The Orthogonality Equation 
  \eqref{orthogonality_equation}
  for the T space has $t = -1/2$ and $t' = -1$.
  Thus
  \[
    \frac{\sin(\pi \lambda_1)} {\pi \lambda_1} = -
    e^{- \pi i \lambda_2} \; \frac{\sin(\pi \lambda_2)} {\pi \lambda_2},\ \lambda_1\lambda_2\neq 0.
  \]

  Since the left hand side is real, we have
    $e^{- \pi i \lambda_2} = \pm 1$.  But this means
    $\lambda_2$ is a nonzero integer, which means the right hand side is zero.  We must have $\sin(\pi \lambda_1) = 0$, which means that
    $\lambda_1$ is a nonzero integer.

    We have shown that this equation is true only when $\lambda_1$
    and $\lambda_2$ are integers.  But this means that $\Lambda$ itself,
    which contains at least $(0,0)$,
    contains integer pairs only.  We have shown that this is
    impossible
    (Theorem \ref{multiplicity_one}).
    Therefore the T Space has no orthonormal basis.
\end{proof}

\section{Remarks and open questions}

The paper leaves many unanswered questions concerning exponential (Riesz) bases for additive spaces. Concerning the overlapping symmetric additive space of the Lebesgue type, i.e. the component space has measure being the Lebesgue measure supported on $[t,t+1]$ where $-1/2\le t<0$,
\medskip

{\bf Question 1:} Is it true that there is no exponential orthonormal basis when $-1/2\le t<0$?

\medskip

In particular, when $t=-1/2$, we call this the {\it Plus Space}, and this is an interesting case since the  Orthogonality Equation is 
$$
 \frac {\sin(\pi \lambda_1)}
  {\pi \lambda_1} =
  -  \frac {\sin(\pi \lambda_2)}
  {\pi \lambda_2}.
$$ 
for which the phase factors all vanish.  We have the following partial result. 

\vspace{1em}
\begin{proposition}
  Suppose $\Lambda$ is a set of exponents for a Fourier frame of the Plus Space.  Then the points of $\Lambda$ are not contained in a straight line.
  \end{proposition}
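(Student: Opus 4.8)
The plan is to argue by contradiction: I assume that $E(\Lambda)$ is a Fourier frame for the Plus Space whose exponents all lie on a single line $\ell$, and I split the argument according to the type of $\ell$. Since $L^2(\rho)$ is infinite dimensional, $\Lambda$ must be infinite. If $\ell$ is vertical or horizontal, then all of $\Lambda$ lies on one vertical or horizontal line, so $\Lambda$ has infinite multiplicity, which contradicts part~1 of Theorem~\ref{bounded_multiplicity}. (Equivalently, one of the projections $\Lambda_x,\Lambda_y$ degenerates to a single point and so cannot be a frame for its component space, contradicting part~2 of Theorem~\ref{bounded_multiplicity}.)

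The substantive case is when $\ell$ has nonzero finite slope, say $\ell=\{(\lambda,\alpha\lambda+\beta)\}$ with $\alpha\neq 0$, so that $\Lambda=\{(\lambda,\alpha\lambda+\beta):\lambda\in\Lambda_x\}$ and the map $\lambda\mapsto(\lambda,\alpha\lambda+\beta)$ is a bijection onto $\Lambda$. Writing $F\in L^2(\rho)$ through its components $f=F(\cdot,0)$ and $g=F(0,\cdot)$, both in $L^2([-\tfrac12,\tfrac12])$, the frame coefficients become $\langle F,e_{\lambda,\alpha\lambda+\beta}\rangle=\tfrac12\hat f(\lambda)+\tfrac12\hat g(\alpha\lambda+\beta)$. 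My goal is to exhibit a single nonzero $F$ annihilated by every element of the frame, which directly violates the lower frame bound $A\|F\|^2\le\sum_{(a,b)\in\Lambda}|\langle F,e_{a,b}\rangle|^2$. The mechanism is a change of variables $u=\alpha y$ giving the identity $\hat g(\alpha\lambda+\beta)=\hat{\tilde g}(\lambda)$, where $\tilde g(u)=\tfrac1{|\alpha|}\,g(u/\alpha)\,e^{-2\pi i\beta u/\alpha}$ is supported on the \emph{symmetric} interval $[-|\alpha|/2,|\alpha|/2]$, and where $\|g\|^2=|\alpha|\,\|\tilde g\|^2$. The reflection arising for $\alpha<0$ is harmless precisely because $[-\tfrac12,\tfrac12]$ is centered at the origin; this centered symmetry is exactly what distinguishes the Plus Space from, for instance, the L Space, whose orthonormal basis does lie on a line.

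The construction then runs as follows. I choose a nonzero $g$ supported on $[-c,c]$ with $c=\min(\tfrac12,\tfrac1{2|\alpha|})$, so that $\tilde g$ is supported inside $[-\tfrac12,\tfrac12]$, and I set $f=-\tilde g$, a legitimate component in $L^2([-\tfrac12,\tfrac12])$. Then $\hat f(\lambda)+\hat g(\alpha\lambda+\beta)=\hat f(\lambda)+\hat{\tilde g}(\lambda)=0$ for every $\lambda\in\Lambda_x$, so all coefficients vanish, while $\|F\|^2=\tfrac12\|f\|^2+\tfrac12\|g\|^2>0$. This contradicts the lower frame bound and completes the oblique case uniformly over all nonzero slopes.

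I expect the main obstacle to be the careful bookkeeping in the oblique case: tracking how the slope rescales both the support and the $L^2$ norm of $g$ (the factors of $|\alpha|$, and the orientation reversal when $\alpha<0$), and stating cleanly why the centered symmetry of $[-\tfrac12,\tfrac12]$ is what allows the rescaled $\tilde g$ to be shrunk so as to fit inside $[-\tfrac12,\tfrac12]$, making $f=-\tilde g$ an admissible choice. Once the substitution identity $\hat g(\alpha\lambda+\beta)=\hat{\tilde g}(\lambda)$ is established, the annihilation and the resulting contradiction are immediate.
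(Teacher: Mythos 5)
Your proof is correct and follows essentially the same route as the paper: rule out axis-parallel lines via the projections, then for an oblique line $\{(\lambda,\alpha\lambda+\beta)\}$ rescale $g$ along the slope so that the frame coefficient becomes $\tfrac12\widehat{(f+\tilde g)}(\lambda)$, and contradict the lower frame bound by taking $f=-\tilde g$. The only differences are organizational: the paper normalizes to slope $0<a\le 1$ and $b=0$ by swapping axes and absorbing a phase (where you instead shrink the support of $g$ so that $\tilde g$ fits in $[-\tfrac12,\tfrac12]$), and it inserts a relative-separation/Bessel-sequence step that your direct choice $f=-\tilde g$ renders unnecessary, since every frame coefficient then vanishes identically.
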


\begin{proof}
It is clear that $\Lambda$ cannot lie on a vertical or a horizontal line since either one of the projections has only one point, which cannot form a frame for its component. Let $
\Lambda = \{ (\lambda, a\lambda+b): \lambda\in \Lambda_0 \}$
for some countable set $\Lambda_0\subset{\mathbb R}$. We note that  $b$ contributes only a phase factor $e_b(x)$ and we can absorb it into the function in the component space.  We can therefore assume $b=0$. Indeed, if $a>1$, we can reparametrize the $\Lambda$ as $(a^{-1}\lambda,\lambda)$ and switch the role of $x$ and $y$ axis. Furthermore, our interval is symmetric about the origin.  In this case, we can assume $0< a \le 1$. Hence, we  can assume
$$
\Lambda = \{ (\lambda, a\lambda): \lambda\in \Lambda_0 \}, \ 0<a\le 1.
$$

\vspace{1em}

Suppose that $E(\Lambda)$ forms a frame for the Plus Space. For any $f,g\in L^2[-1/2,1/2]$, we have
\begin{equation}\label{eqapp1}
\sum_{\lambda\in\Lambda_0} \left| \int_{-\frac12}^{\frac12} f(x) e_{\lambda}(x)dx+\int_{-\frac12}^{\frac12} g(x) e_{a\lambda}(x)dx\right|^2 \asymp \int_{-\frac12}^{\frac12} |f|^2dx +\int_{-\frac12}^{\frac12} |g|^2dx
\end{equation}
By a change of variable, our left hand side is equal to 
$$
\begin{aligned}
\sum_{\lambda\in\Lambda_0} \left| \int_{-\frac12}^{\frac12} f(x) e_{\lambda}(x)dx+\int_{-\frac12}^{\frac12} g(x) e_{a\lambda}(x)dx\right|^2 =& \sum_{\lambda\in\Lambda_0} \left| \int_{-\frac12}^{\frac12} f(x) e_{\lambda}(x)dx+\int_{-\frac{a}2}^{\frac{a}2} \frac1{a}g(\frac{x}{a}) e_{\lambda}(x)dx\right|^2 \\
=& \sum_{\lambda\in\Lambda_0} \left| \int_{-\frac12}^{\frac12} (f(x) + \widetilde{g}(x))e_{\lambda}(x)dx\right|^2\\
\end{aligned}
$$
  where $\widetilde{g}(x) = a^{-1}g(x/a) \chi_{[-a/2,a/2]}$ and $\widetilde{g}\in L^2[-1/2,1/2]$ since $0<a\le 1$. We also have 
\begin{equation}\label{eqapp2}
  \int_{-\frac12}^{\frac12} |\widetilde{g}(x)|^2dx = \frac{1}{a}\int_{-\frac12}^{\frac12}|g(x)|^2dx.
\end{equation}
    Note that $\Lambda$ forms a Fourier frame, which implies that $\Lambda$  (hence $\Lambda_0$) must be relatively separated\footnote{A set $\Lambda$ is relatively separated if it is a finite union of separated sets $\Lambda_i$ and $\Lambda_i$ is separated if there exists $\delta>0$ such that $|\lambda-\lambda'|\ge \delta>0$ for all $\lambda\ne \lambda'\in\Lambda_i$.} (see e.g. \cite[Proposition 2.4]{Lai}).  This implies that $E(\Lambda_0)$ must be a Bessel sequence of $L^2[-1/2,1/2]$ (i.e. it satisfies the upper bound of the frame inequality). Therefore, 
  $$
   \sum_{\lambda\in\Lambda_0} \left| \int_{-\frac12}^{\frac12} (f(x) + \widetilde{g}(x))e_{\lambda}(x)dx\right|^2 \le B \int_{-\frac12}^{\frac12} |f+\widetilde{g}|^2 dx.
  $$ 
  Combining this with (\ref{eqapp1}) and (\ref{eqapp2}), we can find a constant $C$ such that for all $f,g\in L^2[-1/2,1/2]$, 
  $$
 \int_{-\frac12}^{\frac12} |f|^2dx +a \int_{-\frac12}^{\frac12} |\widetilde{g}|^2dx \le C  \int_{-\frac12}^{\frac12} |f+\widetilde{g}|^2 dx.
  $$
  However, this is not possible since we can take $f= -\widetilde{g}\ne 0$. 
    \end{proof}
    
    It is unlikely that the Plus Space or any other overlapping symmetric additive space of Lebesgue type will admit an orthonormal basis. It would be more interesting to determine if they will admit a Riesz basis. 
    
    \medskip

{\bf Question 2:} Does there exist any exponential Riesz basis when $-1/2\le t<0$? 

\medskip 

If we look at the literature, any measure known to  admit a Fourier frame also admits a Riesz basis. It would be interesting to obtain an example of the measures as below:

    \medskip

{\bf Question 3:} Does there exist  a measure  $\mu$ that admits a Fourier frame but not a Riesz basis?

\medskip 

Our result in this paper only focuses on the addition of two measures supported on two subspaces. Our problem can also be studied for other subspaces. In \cite{ILLW}, the authors showed that finitely many additions of Lebesgue measures supported on different subspaces admit Fourier frames in the ambient space. This also shows that the area measures on boundary of polytopes admit Fourier frames. It would be interesting to determine if the area measures admit exponential Riesz bases or orthonormal bases.  In particular, 

\medskip

{\bf Question 4:} Does the length measure of the boundary of a triangle, or a square, admit an exponential Riesz basis?

\medskip

An answer to Question 4  may shed some light on the long open problem of whether the triangle admits an exponential Riesz basis.

\vspace{1em}
%\begin{theorem}
%  If $\Lambda$ is a set of exponents for an orthonormal
%  basis of the Plus Space,
%\end{theorem}
%
%\begin{proof}
%  If the points of $\Lambda$ were equally spaced along a line,
%  then 
%all differences between
%points of $\Lambda$ would be integers, according to our Lemma.
%This implies that $\Lambda$
%consists solely of integer pairs.  This is impossible
%(Theorem \ref{not_integers}).
%\end{proof}

\end{document}